\DeclareSymbolFont{extraup}{U}{zavm}{m}{n}
\DeclareMathSymbol{\varheartsuit}{\mathalpha}{extraup}{86}
\DeclareMathSymbol{\vardiamondsuit}{\mathalpha}{extraup}{87}
\newcommand{\cL}{\mathcal{L}}
\newcommand{\sX}{\mathsf{X}}
\newcommand{\sY}{\mathsf{Y}}
\newcommand{\cO}{\mathcal{O}}
\newcommand{\mW}{\mathbf{W}}
\newcommand{\mL}{\mathbf{K}}
\newcommand{\mI}{\mathbf{I}}
\newcommand{\mP}{\mathbf{P}}
\newcommand{\mQ}{\mathbf{Q}}
\newcommand{\eqdef}{\coloneqq}
\DeclareMathOperator*{\argmin}{arg\,min}
\newcommand{\R}{\mathbb{R}}
\newcommand{\N}{\mathbb{N}}
\newcommand{\Dotprod}[1]{\left\langle#1\right\rangle}
\newcommand{\range}{\mathrm{range}}
\newcommand{\bg}{\mathrm{D}}
\newcommand{\Span}{\mathrm{Span}}
\newcommand{\norm}[1]{\|#1\|}
\newcommand{\sqn}[1]{\norm{#1}^2}
\newcommand{\Norm}[1]{\left\|#1\right\|}
\newcommand{\sqN}[1]{\Norm{#1}^2}
\def\<#1,#2>{\langle #1,#2\rangle}
\newtheorem{theorem}{Theorem}
\newtheorem{corollary}{Corollary}
\newtheorem{proposition}{Proposition}
\newtheorem{lemma}{Lemma}
 \author{Adil Salim \qquad Laurent Condat \\ Dmitry Kovalev \qquad Peter Richt\'{a}rik\\
 \phantom{xx}
 \\
 King Abdullah University of Science and Technology (KAUST), \\Thuwal 23955-6900, Kingdom of Saudi Arabia}
 \date{Authors' final version. Published in proceedings of \\The 25th International Conference on Artificial Intelligence and Statistics (AISTATS 2022)}
\title{\bf An Optimal Algorithm  for Strongly Convex Minimization under Affine Constraints}
\begin{document}
\maketitle

\begin{abstract}%
  Optimization problems under affine constraints appear in various areas of machine learning. We consider the task of minimizing a smooth strongly convex function $F(x)$ under the affine constraint $\mL x = b$, with an oracle providing evaluations of the gradient of $F$ and multiplications by $\mL$ and its transpose. 
We provide lower bounds on the number of gradient computations and matrix multiplications to achieve a given accuracy. Then we propose an accelerated primal--dual algorithm achieving these lower bounds. Our algorithm is the first optimal algorithm for this class of problems.
\end{abstract}

{\small
  \hypersetup{linkcolor=black}
  \tableofcontents
}

\section{Introduction}

We consider the convex optimization problem 
\begin{equation}
	\label{eq:original-pb}
	\min_{x\in \sX}\; F(x) \quad \text{s.t.}\quad \mL x = b,
\end{equation}
where $F$ is a smooth and strongly convex function over $\sX \eqdef \R^d$, $b \in \sY \eqdef \R^p$ is a vector and 
$\mL$ is a nonzero $p \times d$ matrix, 
for some integers $d\geq 1$, $p\geq 1$. We adopt the matrix-vector setting for simplicity of the notations, 
but the formalism holds more generally with arbitrary real Hilbert spaces $\sX$ and $\sY$ and bounded linear operator $\mL: \sX \to \sY$.
We suppose that 
$b$ is in 
the range of $\mL$; then the sought solution to \eqref{eq:original-pb}, denoted by $x^\star$, exists and is unique, by strong convexity.

Problem~\eqref{eq:original-pb} covers a large number of applications in machine learning \citep{sra11,bac12,pol15} and beyond \citep{bau10,sta16,glo16}. Examples include inverse problems in imaging \citep{cha16}, and recovering a model from partial measurements $b$ on the model, in compressed sensing~\citep{gol16} or sketched learning-type applications~\citep{ker18}. In optimal transport, one often looks for measures with fixed marginals, which can be written as an affine equality constraint \citep{pey19}. Network flow optimization takes the form of Problem~\eqref{eq:original-pb}, where $b$ contains the incoming and outgoing rates at source and sink nodes of a network, and $\mL$ is the edge-node incidence matrix \citep{zar13}. Decentralized optimization is a well-known instance of Problem~\eqref{eq:original-pb}, with $\mL$ a gossip matrix (or its square root), and $b=0$~\citep{shi15, scaman2017optimal, gorbunov2019optimal, 
li2020decentralized,li2020optimal,ye2020multi,arjevani2020ideal,kov20,dvi21}. 
If additional affine constraints are added to the decentralized optimization problem, for instance that some elements or linear measurements of the sought model $x^\star$ are fixed, decentralized optimization reverts to Problem~\eqref{eq:original-pb} with nonzero $b$.

For large-scale convex optimization problems like \eqref{eq:original-pb}, primal--dual splitting  algorithms  \citep{bot14,kom15,con19,con20} are well suited, as they are easy to implement and  typically show state-of-the-art performance. The fully-split algorithms do not require the ability to project onto the constraint space $\{x \in \sX :\ \mL x = b\}$, and are therefore particularly adequate in the applications mentioned above. 
Precisely, we say that an  iterative algorithm is fully split if it  produces a sequence of iterates $(x^k)_{k \geq 0} \in \sX^{\N}$ converging to the solution $x^\star$ of \eqref{eq:original-pb}, using only computations of $\nabla F$ and multiplications by $\mL$ and $\mL^T$, the transpose of  $\mL$. 

There exist several fully-split primal--dual algorithms well suited to solve Problem~\eqref{eq:original-pb} and even more general problems~\citep{com12,con13,vu13,yan18,mis19,sal20}. In particular, we can mention the algorithm 
first proposed in \citet{lor11}, and rediscovered independently as the PDFP2O algorithm~\citep{che13} and the Proximal Alternating Predictor-Corrector (PAPC) algorithm \citep{dro15}. For simplicity, we name it the PAPC algorithm. 
When applied to Problem~\eqref{eq:original-pb}, with $F$ strongly convex, the PAPC has been proved to converge linearly by Salim et al.~\citep{sal20}.

In this paper, we focus on the complexity of fully split algorithms to solve Problem~\eqref{eq:original-pb}, which is of primary importance in large-scale applications. That is, we study 
the number of gradient computations and matrix multiplications necessary to reach a given accuracy. We first derive lower bounds for these two quantities. 
No algorithm is known, matching these lower bounds, although nearly optimal algorithms exist in the case $b=0$~\citep{dvi21}. Then, we propose a new accelerated primal--dual algorithm, which matches the lower bounds, and thus is optimal. Our algorithm can be viewed as an accelerated version of the PAPC algorithm.

In summary, our main \textbf{contributions} are the following:
\begin{itemize}
    \item We provide complexity lower bounds for solving Problem~\eqref{eq:original-pb} within the class of algorithms  performing evaluations of $\nabla F$ and multiplications by $\mL$ and $\mL^T$, by a reduction to the technique in ~\citet{scaman2017optimal}.
    \item We propose a new algorithm for solving Problem~\eqref{eq:original-pb}. 
    \item We prove that the complexity of our algorithm matches the lower bounds, and therefore it is optimal.
\end{itemize}
The complexity results presented in this paper are dimension-independent. Under the additional assumption that the dimension is small, one can imagine alternative strategies to solve Problem~\eqref{eq:original-pb} more efficiently. Our algorithm is meant to be applied on high-dimensional problems.

This paper is organized as follows. In Section~\ref{sec:background}, we introduce the notations and assumptions. Then, we summarize our contributions in the light of prior work in Section~\ref{sec:relat}. In Section~\ref{sec:fo}, we define the class of algorithms under study and we derive the corresponding  complexity lower bounds for solving~\eqref{eq:original-pb}. 
Our main algorithm and our main result about its convergence and complexity 
are given in Section~\ref{sec:algo}. Our approach for deriving and analyzing this algorithm is provided in Section~\ref{sec6}. We illustrate our convergence results by numerical experiments in Section~\ref{sec:num}. The technical proofs are postponed to the Supplementary Material.

\section{Mathematical Setting}
\label{sec:background}

Let us make the formulation of the problem \eqref{eq:original-pb} more precise. 
 The convex function $F : \sX \to \R$ is an $L$-smooth and $\mu$-strongly convex function, for some $\mu>0$ and $L>0$; that is, $F$ is differentiable and satisfies the strong convexity inequality
\begin{equation*}
    F(x) + \Dotprod{\nabla F(x),x'-x} + \frac{\mu}{2}\|x-x'\|^2 \leq F(x'), 
\end{equation*}
and the smoothness inequality
\begin{equation*}
    F(x') \leq F(x) + \Dotprod{\nabla F(x),x'-x} + \frac{L}{2}\|x'-x\|^2,
\end{equation*}
for every $(x,x')\in\sX^2$. That is,  $\nabla F$ is $L$-Lipschitz continuous and $F-\frac{\mu}{2}\|\cdot\|^2$ is convex. Moreover, the Bregman divergence of $F$ is denoted by $\bg_F(x,x') \eqdef F(x) - F(x') -\Dotprod{\nabla F(x'),x-x'} \geq 0$.
We have $0 < \mu \leq L$ and we denote by
\begin{equation*}
\kappa \eqdef \frac{L}{\mu} \geq 1
\end{equation*}
the condition number of $F$.

The kernel of the matrix $\mL$ is denoted by $\ker(\mL)$ and its range by $\range(\mL)$. 
We define  the symmetric positive semidefinite matrix $\mW \eqdef \mL^T \mL$. The largest eigenvalue of $\mW$ is denoted by $\lambda_{\max}(\mW)$ and its smallest \textit{positive} eigenvalue by $\lambda_{\min}^{+}(\mW)$. We have $0 < \lambda_{\min}^{+}(\mW) \leq \lambda_{\max}(\mW)$ and we denote by
\begin{equation*}
    \chi(\mW) \eqdef \frac{\lambda_{\max}(\mW)}{\lambda_{\min}^{+}(\mW)} \geq 1
\end{equation*}
the condition number of $\mW$.

The condition number $\kappa$ (resp. $\chi(\mW)$) measures the regularity of $F$ (resp. $\mL$). The complexity results obtained in this paper are (nondecreasing) functions of $\kappa$ (resp. $\chi(\mW)$). 

We can note that $\ker(\mL)=\ker(\mW)$. If $\ker(\mW)=\{0\}$, the solution $x^\star$ to the linear system $\mL x=b$ is unique, so that $F$ does not play any role and Problem~\eqref{eq:original-pb} reverts to solving this linear system. We allow for this case, but it is of course not the focus of this paper.

Finally, we denote by $\iota_{\{b\}}$ the indicator function of $\{b\}$; that is, $\iota_{\{b\}}: y\in \sY \mapsto \{0$ if $y=b$, $+\infty$ otherwise$\}$. 
This function is convex and lower semicontinuous over $\sY$. Denoting by $\partial$ the subdifferential operator~\cite[Section 16]{bau17}, we recall that $\partial \iota_{\{b\}}(y) \neq \emptyset$ if and only if $y = b$.

\section{Related Works and Summary of the Contributions}
\label{sec:relat}

Most algorithms able to solve Problem~\eqref{eq:original-pb} using evaluations of $\nabla F$ and multiplications by $\mL$ and $\mL^T$ can be viewed as primal--dual algorithms. For instance, the Condat-V\~u algorithm~\citep{con13,vu13} and its variants, the PAPC algorithm~\citep{lor11,che13,dro15} 
 can be applied to Problem~\eqref{eq:original-pb}. The PAPC algorithm applied to Problem~\eqref{eq:original-pb} consists in iterating
\begin{equation} 
\label{eq:papc}
\left\lfloor
			\begin{array}{r@{}l}
			x^{k+\frac{1}{2}} &{} \eqdef x^k - \eta \nabla F(x^k) - \eta \mL^T y^k\\
		y^{k+1} &{} \eqdef y^k + \theta (\mL x^{k+\frac{1}{2}} -b)\\
		x^{k+1} &{} \eqdef x^k - \eta \nabla F(x^k) - \eta \mL^T y^{k+1}
			\end{array}\right.
	\end{equation}
for some parameters 	$\eta,\theta > 0$; it converges in general if $\eta \in (0,\frac{2}{L})$ and $\eta \theta \|\mL\|^2\leq 1$, see \citep{con19}, but this particular instance converges linearly~\citep{sal20}, see Table~\ref{tab:rate}. 
 To our knowledge, the first algorithm solving Problem~\eqref{eq:original-pb}, for which linear convergence was proved, has been proposed in \citet{mis19}, see Table~\ref{tab:rate}.

Most of the progress in solving Problem~\eqref{eq:original-pb}, \emph{with $b=0$}, at an accelerated or (nearly) optimal rate have been made recently in the particular case of decentralized optimization~\citep{scaman2017optimal,gorbunov2019optimal,li2020decentralized,li2020optimal,ye2020multi,arjevani2020ideal,kov20,dvi21}. In this case, $\mL$ is typically the square root of a gossip matrix, i.e.\ a symmetric positive semidefinite matrix supported by a graph, whose kernel is the consensus space. In particular, optimal decentralized algorithms have been proposed using acceleration techniques~\citep{nes04,auzinger2011iterative,allen2017katyusha} in \citep{scaman2017optimal,kov20,li2020optimal}. In particular, the algorithm of \citet{scaman2017optimal} relies on the computation of $\nabla F^\ast$, where $F^\ast$ is the Fenchel transform of $F$. Since evaluating $\nabla F^\ast$ is equivalent to minimizing $F$, \citet{kov20} proposed an algorithm relying on $\nabla F$ only. In Machine Learning applications, `full' gradients are often intractable, therefore \citet{li2020optimal} introduced a method relying on stochastic estimates of $\nabla F$ only. Each of these three methods is optimal for the class of algorithms they belong to. 
Our approach can be seen as an extension of~\citep{kov20} to the general setting of linearly constrained minimization, with an arbitrary right hand side $b$.\footnote{We do not assume the knowledge of a solution $\tilde{x}$ to the linear system $\mL x = b$, otherwise one could get back to the case $b = 0$ using a change of variable. One could think of solving this linear system 
approximately 
as a preprocessing step: the typical Conjugate Gradient Method yields $\hat{x}$ with $\|\mL \hat{x}-b\|^2 \leq \epsilon\|b\|^2$  with $\mathcal{O}(\sqrt{\chi}\log(1/\epsilon))$ complexity, where $\chi=\chi(\mW)$.
But, assuming for simplicity that $\|\mL\|=1$, to guarantee that $\|\hat{x}-\tilde{x}\|^2\leq \epsilon\|b\|^2$ for some $\tilde{x}$ with $\mL \tilde{x} = b$,  using the inequality 
$ \|\hat{x}-\tilde{x}\|^2\leq \chi\|\mL \hat{x}-b\|^2 $, the complexity becomes  $\mathcal{O}(\sqrt{\chi}\log(\chi/\epsilon))$. Thus, there is an additional $\log(\chi)$ factor appearing in the complexity, which is not optimal, contrary to the proposed approach. }

In the case where projecting onto the constraint space $\{x \in \sX :\ \mL x = b\}$ is possible, FISTA~\citep{beck2009fast,dos15} is an optimal algorithm for solving Problem~\eqref{eq:original-pb}. FISTA can be seen as Nesterov's acceleration~\citep{nes04} of the classical projected gradient algorithm.

\begin{table*}[t]
  \centering
  \caption{Comparison of the complexity of state-of-the-art algorithms with our results, in terms of gradient computations and matrix multiplications to find  $x\in \sX$ such that $\|x-x^\star\|^2 \leq \varepsilon$. 
  The condition number of $F$ is denoted by $\kappa$ and the condition number of $\mL^T \mL$ is denoted by $\chi$.\medskip} 
  \label{tab:rate}
  \renewcommand{\arraystretch}{1.4} 
  \begin{tabular}[h]{|c|c|c|c|}
    \hline
   Algorithm  & Gradient computations & Matrix multiplications\\
     \hline  
   PAPC algorithm 
   \citep{sal20} & $\cO\left((\kappa+ \chi) \log\frac{1}{\varepsilon}\right)$ & $\cO\left((\kappa+ \chi) \log\frac{1}{\varepsilon}\right)$\\
   \hline
         \citep{mis19} &
 				$\cO\left((\kappa+ \chi) \log\frac{1}{\varepsilon}\right)$ & $\cO\left((\kappa+ \chi) \log\frac{1}{\varepsilon}\right)$ \\
   \hline
   \citep{dvi21} (case $b = 0$) & $\cO\left(\sqrt{\kappa} \log\frac{1}{\varepsilon}\right)$&
 				$\cO\left(\sqrt{\kappa \chi} \log^2\frac{1}{\varepsilon}\right)$ \\
    \hline
   \textbf{Algorithm~\ref{alg:ALV-opt} (This paper, Theorem~\ref{th:main})} &
 				$\boldsymbol{\cO\left(\sqrt{\kappa}\log\frac{1}{\varepsilon}\right)}$&
 				$\boldsymbol{\cO\left(\sqrt{\kappa\chi}\log\frac{1}{\varepsilon}\right)}$\\
     \hline
    \hline
   \textbf{Lower bound (This paper, Theorem~\ref{th:lb})} &
 				$\cO\left(\sqrt{\kappa}\log\frac{1}{\varepsilon}\right)$&
 				$\cO\left(\sqrt{\kappa\chi}\log\frac{1}{\varepsilon}\right)$\\
      \hline
\end{tabular}
\end{table*}

In a nutshell, our approach consists in a rigorous combination of Nesterov's acceleration~\citep{nes04} to minimize a smooth and strongly convex function, and the Chebyshev iteration method \citep{fla50,gol83,auzinger2011iterative,gut02} for linear system solving. 
Our approach allows us to accelerate the PAPC algorithm and, for the first time, to achieve the asymptotic complexity lower bounds.
Our results and the most relevant results of the literature are summarized in Table~\ref{tab:rate}.

\section{First-Order Algorithms for the Problem}
\label{sec:fo}

We now define the family of algorithms considered to solve Problem~\eqref{eq:original-pb}. Informally, this is the family of algorithms using gradient computations and matrix multiplications. Since no particular structure is assumed on $\mL$, any multiplication of the iterates by $\mL$ must be followed by a multiplication by $\mL^T$ in order to map the iterates back into the optimization space $\sX$, before an application of $\nabla F$. Hence, we consider the wide class of Black-Box First Order algorithms using $\nabla F$, $\mL$ and $\mL^T$, denoted by BBFO$(\nabla F,\mL)$, which generate a sequence of vectors $(x^n)_{n\in\mathbb{N}}\in\sX^\mathbb{N}$ such that
\begin{align*}
	x^{n+1} \in {}&\Span\Big(x^0,\ldots,x^n,\nabla F(x^0),\ldots, \nabla F(x^n), \\
	&\quad\mL^T \Span\big(b, \mL x^0,\ldots,\mL x^n,\mL \nabla F(x^0),\ldots, \\
	&\quad\mL \nabla F(x^n)\big)\Big)
\end{align*}
and do not apply the operators $\nabla F$, $\mL$ and $\mL^T$ to other vectors.
It is important to note that the index $n$ need not coincide with the iteration counter of an iterative algorithm: each $x^{n}$ can correspond to an intermediate vector in $\sX$ obtained after any computation or sequence of computations during the course of the algorithm.

\begin{theorem}[Lower bounds]
\label{th:lb}
	Let $\chi \geq 1$. There exist a vector $b_0$, a matrix $\mL_0$ such that the condition number of $\mL_0^T \mL_0$ is $\chi$, and a smooth and strongly convex function $F_0$ with condition number $\kappa$, such that the following holds: for any $\varepsilon > 0$, any BBFO$(\nabla F_0,\mL_0)$ algorithm requires at least\\[-5mm]
		\begin{itemize}
			\item $\Omega(\sqrt{\kappa \chi} \log(1/\varepsilon))$ multiplications by $\mL_0$,\\[-5mm]
			\item $\Omega(\sqrt{\kappa \chi} \log(1/\varepsilon))$ multiplications by $\mL_0^T$,\\[-5mm]
			\item $\Omega(\sqrt{\kappa} \log(1/\varepsilon))$ computations of $\nabla F_0$,
		\end{itemize}
		to output a vector $x$ such that $\sqn{x-x^{\star}} < \varepsilon$, where $x^{\star} = \argmin_{\{x\ :\ \mL_0 x = b_0\}} F_0(x).$
\end{theorem}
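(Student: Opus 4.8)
The plan is to exhibit one hard instance $(F_0,\mL_0,b_0)$ --- existence is all the theorem requires --- built by ``tensoring'' Nesterov's worst quadratic for smooth strongly convex minimization with a path-graph structure for the operator, and then to run a Krylov/span-tracking argument against the whole class BBFO$(\nabla F_0,\mL_0)$. Since the instance is fixed before $\varepsilon$ is revealed, I would use the Hilbert-space formalism the paper allows: fix an integer $n=\Theta(\sqrt\chi)$, set $\sX=(\ell_2)^n$, and write $x=(x_1,\dots,x_n)$ in block form.

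\textbf{Construction.} Let $\mL_0$ act blockwise as a fixed rescaling of a matrix $\mathbf{P}$ with $\mathbf{P}^T\mathbf{P}$ equal to the Laplacian of the path on $n$ vertices, so that $\mW_0=\mL_0^T\mL_0$ is that Laplacian acting blockwise. Its spectrum is $\{2-2\cos(\pi k/n):0\le k<n\}$, so $\chi(\mW_0)=\lambda_{\max}(\mW_0)/\lambda_{\min}^{+}(\mW_0)=\Theta(n^2)$ and one can choose $n$ and the scaling so that this equals $\chi$ exactly; moreover $\ker\mL_0$ is the consensus subspace $\{(w,\dots,w)\}$. Put $b_0=0$, so that $x^\star=(w^\star,\dots,w^\star)$ with $w^\star=\argmin_w\frac1n\sum_{i=1}^n f_i(w)$. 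Each $f_i$ is an $L$-smooth, $\mu$-strongly convex quadratic on $\ell_2$ obtained from Nesterov's infinite tridiagonal operator by the classical \emph{odd/even splitting}: the ``even'' rows of the tridiagonal quadratic go into $f_1$, the ``odd'' rows into $f_n$, and $f_i=\frac\mu2\|\cdot\|^2$ for $1<i<n$, all normalized so that $F_0(x)=\sum_i f_i(x_i)$ is globally $\mu$-strongly convex and $L$-smooth with condition number $\kappa$ while the aggregate $\frac1n\sum_i f_i$ is exactly a Nesterov hard instance, whose minimizer has the geometric lower tail $\sum_{\ell>j}(w^\star_\ell)^2\ge c\,\rho^{\,j}\|w^\star\|^2$ with $\rho=\big(\tfrac{\sqrt\kappa-1}{\sqrt\kappa+1}\big)^{2}$.

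\textbf{Span-tracking.} The core is a lemma describing the subspace $\mathcal S\subseteq\sX$ reachable after $G$ evaluations of $\nabla F_0$ and $M$ multiplications by $\mL_0$ or $\mL_0^T$, in an arbitrary interleaving (recall the BBFO index is not an iteration counter). Since $\nabla F_0(x)=(\nabla f_1(x_1),\dots,\nabla f_n(x_n))$ with each $\nabla f_i$ tridiagonal-plus-affine, a $\nabla F_0$ evaluation advances the ``within-block frontier'' (the largest coordinate that may be nonzero) by at most one in each block, and a fresh odd/even coordinate is injected only in block $1$ or block $n$; since $\mL_0$ and $\mL_0^T$ are supported on the edges of the path, one matrix multiplication moves the ``active-block frontier'' by at most one block. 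An induction over these two frontiers then shows that injecting solution coordinate $j+1$ at the relevant end block forces coordinate $j$ to have been transported across the path first, i.e.\ $\Theta(n)$ matrix multiplications per unlocked coordinate. Hence if the output obeys $\|x-x^\star\|^2<\varepsilon$ (normalize $\|x^\star\|^2=\Theta(1)$), the number $u$ of unlocked coordinates must satisfy $\rho^{\,\Theta(u)}<\varepsilon$, so $u=\Omega(\sqrt\kappa\log(1/\varepsilon))$, giving $G=\Omega(\sqrt\kappa\log(1/\varepsilon))$ and $M=\Omega(n\sqrt\kappa\log(1/\varepsilon))=\Omega(\sqrt{\kappa\chi}\log(1/\varepsilon))$. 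Finally, advancing the active-block frontier by one block costs a multiplication by $\mL_0$ \emph{and} one by $\mL_0^T$, and the construction is invariant under reversing the path, so $\Omega(\sqrt{\kappa\chi}\log(1/\varepsilon))$ holds separately for $\mL_0$ and for $\mL_0^T$.

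\textbf{Main obstacle.} The delicate point is the span-tracking lemma: one must set up the local functions $f_i$ so that the odd/even Nesterov structure and the chain structure genuinely \emph{compose} --- so that unlocking each new coordinate of $w^\star$ provably requires a full round trip across the path, with no shortcut exploiting the fact that a single call to $\nabla F_0$ exposes all blocks simultaneously --- and then carry the two-frontier bookkeeping through for an adversary free to interleave gradient and matrix operations at will. The remaining ingredients --- the geometric tail estimate producing the $\log(1/\varepsilon)$ factor and the exact matching of condition numbers ($\chi(\mW_0)=\chi$ and aggregate condition number $=\kappa$) --- are routine refinements of the classical Nesterov and Scaman--Bubeck--Lee--Massouli\'e arguments.
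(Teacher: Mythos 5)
Your proposal is correct in substance and, at the level of the hard instance, is the same as the paper's: both use the Scaman--Bach--Bubeck--Lee--Massouli\'e construction (a path-type gossip matrix $\mW$ with $\chi(\mW)=\chi$, and Nesterov's worst quadratic split between the two far-apart blocks, summed into $F_0(x)=\sum_i f_i(x_i)$, with $b_0=0$ so that $x^\star$ is the consensus minimizer). The difference is purely in how the decentralized lower bound is obtained. The paper's proof is a short reduction: it observes that the BBFO span definition only ever applies $\mL_0^T$ to vectors already multiplied by $\mL_0$ (or to $b_0=0$), so for this instance every BBFO$(\nabla F_0,\mL_0)$ algorithm \emph{is} a black-box decentralized procedure using $\mW=\mL_0^T\mL_0$ in the sense of Scaman et al., and then it cites their Corollary~2 verbatim for the $\Omega(\sqrt{\kappa\chi}\log(1/\varepsilon))$ communication and $\Omega(\sqrt{\kappa}\log(1/\varepsilon))$ gradient bounds, converting one multiplication by $\mW$ into one by $\mL_0$ plus one by $\mL_0^T$. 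You instead re-derive that cited result from scratch via the two-frontier span-tracking induction and the geometric tail of the Nesterov minimizer. What the paper's route buys is brevity and safety: the delicate lemma you correctly flag as the main obstacle is exactly the content of the published Corollary~2, so it need not be reproven. What your route buys is a self-contained argument and a slightly finer conclusion, since tracking $\mL_0$ and $\mL_0^T$ separately gives the two matrix-multiplication bounds directly rather than through the ``$\mW$ equals $\mL_0$ followed by $\mL_0^T$'' conversion. As written your span-tracking lemma is a plan rather than a proof, so to be complete you would either have to carry out that induction in full or, as the paper does, collapse the $\mL_0/\mL_0^T$ calls into $\mW$-calls and invoke the known decentralized lower bound.
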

Theorem~\ref{th:lb} provides lower bounds on the number of gradient computations and matrix multiplications needed to reach $\varepsilon$ accuracy, which here means that $\sqn{x-x^{\star}} \leq \varepsilon$.

\begin{proof} We follow the ideas of~\citet{scaman2017optimal}, in the context of decentralized optimization, to exhibit  worst-case function $F_0$ and matrix $\mL_0$. 
Let $\chi \geq 1$.
	\paragraph{``Bad'' function $F_0$ and ``bad'' matrix $\mL_0$.} 
	Consider the family of smooth and strongly convex functions $(f_i)_{i=1}^n$  
	and the matrix $\mW$ with condition number $\chi$ given by~\cite[Corollary 2]{scaman2017optimal}. Denote by $\kappa$ the common condition number of $f_i$. Set $F_0(x_1,\ldots,x_n) \eqdef \sum_{i=1}^n f_i(x_i)$, $\mL_0 \eqdef \sqrt{\mW}$ and $b_0 \eqdef 0$.
	Then, the condition number of $F$ is $\kappa$ and the condition number of $\mW = \mL_0^T \mL_0$ is $\chi$. Moreover, $\mW$ is a gossip matrix~\cite[Section 2.2]{scaman2017optimal}.

	\paragraph{BBFO$(\nabla F_0,\mL_0)$ are decentralized optimization algorithms.}
	Any BBFO algorithm using these operators $\nabla F_0$, $\mL_0$, $\mL_0^T$ can be rewritten as a function of $\nabla F_0$ and $\mW = \mL_0^T \mL_0$. Indeed,  
	\begin{align*}
		&\Span\Big(x^0,\ldots,x^n,\nabla F_0(x^0),\ldots, \nabla F_0(x^n), \\
		&\quad\mL_0^T \Span\big(b_0,\mL_0 x^0,\ldots,\mL_0 x^n,\mL_0 \nabla F_0(x^0),\ldots, \\
		&\quad\mL_0 \nabla F_0(x^n)\big)\Big)\\
		={}&\Span\Big(x^0,\ldots,x^n,\nabla F_0(x^0),\ldots, \nabla F_0(x^n), \\
		&\quad\Span\big(\mW x^0,\ldots,\mW x^n,\mW \nabla F_0(x^0),\ldots, \\
		&\quad\mW \nabla F_0(x^n)\big)\Big).
	\end{align*}
Since $\mW$ is a gossip matrix, BBFO$(\nabla F_0,\mL_0)$ algorithms are therefore Black-box optimization procedures using $\mW$, in the sense of~\cite[Section 3.1]{scaman2017optimal}. In other words, BBFO$(\nabla F_0,\mL_0)$ algorithms are decentralized optimization algorithms over a network, in which communication amounts to multiplication by $\mW$, and local computations correspond to evaluations of $\nabla F$. 
	\paragraph{Any solution to~\eqref{eq:original-pb} is a solution to a decentralized optimization problem.}
	Since $\ker(\mW)$ is the consensus space, $x^{\star} = \argmin_{\{x\ :\ \mW x = 0\}} F_0(x)$ can be written as $x^{\star} = (x_0^{\star},\ldots,x_0^{\star})$ where $x_0^{\star} = \argmin \frac{1}{n}\sum_{i=1}^n f_i$.
	\paragraph{BBFO$(\nabla F_0,\mL_0)$ algorithms cannot outperform the lower bounds of decentralized algorithms.}
	As shown in \citet[Corollary 2]{scaman2017optimal}, for any $\varepsilon >0$, any Black-box optimization procedure using $\mW$ requires at least $\Omega\left(\sqrt{\kappa \chi}\log(1/\varepsilon)\right)$ communication rounds, and at least $\Omega\left(\sqrt{\kappa}\log(1/\varepsilon)\right)$ gradient computations to output $x = (x_1, \ldots, x_n)$ such that $\sqn{x - x^{\star}} < \varepsilon$, where $x^{\star} = \argmin F_0.$ \textit{In particular}, for any $\varepsilon >0$, any BBFO$(\nabla F_0,\mL_0)$ algorithm requires at least $\Omega\left(\sqrt{\kappa \chi}\log(1/\varepsilon)\right)$ multiplications by $\mL_0^T \mL_0$, and at least $\Omega\left(\sqrt{\kappa}\log(1/\varepsilon)\right)$ computations of $\nabla F_0$ to output $x = (x_1, \ldots, x_n)$ such that $\sqn{x - x^{\star}} < \varepsilon$, where $x^{\star} = \argmin F_0.$ 
	
	Finally, one multiplication by $\mW$ is equivalent to one multiplication by $\mL_0$ followed by one multiplication by $\mL^T_0$.
\end{proof}

\section{Proposed Algorithm}
\label{sec:algo}

\begin{figure*}[t!]
\begin{minipage}{.482\textwidth}
\begin{algorithm}[H]
	\caption{Proposed algorithm}
	\label{alg:ALV-opt}
	\begin{algorithmic}[1]
		\STATE {\bf Parameters:}  $x^0 \in \sX$, $N \in \mathbb N^{*}$, $\tau \in (0,1)$,
		\STATE $\lambda_1, \lambda_2,\eta,\theta ,\alpha>0$
		\STATE $x_f^0 \eqdef x^0$, $u^0 \eqdef 0_{\sX}$
		\FOR{$k=0,1,\ldots$}{}
		\STATE $x_g^k \eqdef \tau x^k + (1-\tau)x_f^k$\label{alg:ALV-opt:line:x:1}
		\STATE $x^{k+\frac{1}{2}} \eqdef (1+\eta\alpha)^{-1}\big(x^k - \eta (\nabla F(x_g^k) $
		\STATE $\ \ {}- \alpha  x_g^k + u^k)\big)$\label{alg:ALV-opt:line:x:2}
		\STATE $r^k \eqdef  \theta \big(x^{k+\frac{1}{2}}$
		\STATE $\ \ {}-\mathrm{Chebyshev}(x^{k+\frac{1}{2}},\mL,b,N,\lambda_1,\lambda_2)\big)$
		\STATE $u^{k+1} \eqdef u^k + r^k$
		\STATE $x^{k+1} \eqdef x^{k+\frac{1}{2}} -\eta (1+\eta\alpha)^{-1}r^k$
		\STATE $x_f^{k+1} \eqdef x_g^k + \tfrac{2\tau}{2-\tau}(x^{k+1} - x^k)$\label{alg:ALV-opt:line:x:4}
		\ENDFOR
	\end{algorithmic}
\end{algorithm}
	\end{minipage}
	\ \ \ \ \ \ \begin{minipage}{.482\textwidth}
\begin{algorithm}[H]
	\caption{Chebyshev iteration} 
	\label{alg:M9}
	\begin{algorithmic}[1]
		\STATE {\bf Parameters:} $z^0\in \sX, \mL, 
		b \in \sY, N \in \mathbb N^{*}, \lambda_1>0$, $\lambda_2 > 0.$
		\STATE $\rho\eqdef\big(\lambda_{1}-\lambda_{2}\big)^2/16$, $\nu\eqdef(\lambda_{1}+\lambda_{2})/2$
		\STATE $\gamma^0 \eqdef -\nu/2$
		\STATE $p^0 \eqdef -\mL^T (\mL z^0-b)/\nu$
		\STATE $z^1 \eqdef z^0 + p^0$
		\FOR{$i=1,\ldots,N-1$}{}
		\STATE $\beta^{i-1}\eqdef\rho/\gamma^{i-1}$
		\STATE $\gamma^i\eqdef-(\nu+\beta^{i-1})$
		\STATE $p^i\eqdef\big(\mL^T (\mL z^{i}-b)+\beta^{i-1}p^{i-1}\big)/\gamma^i$
		\STATE $z^{i+1}\eqdef z^i+p^i$
		\ENDFOR
	\STATE {\bf Output:} $z^N$
	\end{algorithmic}
\end{algorithm}
\end{minipage}
\end{figure*}

In this section, we present our main algorithm, Algorithm~\ref{alg:ALV-opt}, and our main convergence result, Theorem~\ref{th:main}. The derivations and proofs are deferred to Section~\ref{sec6}. Algorithm~\ref{alg:M9} implements the classical Chebyshev iteration~\citep{fla50,gol83,auzinger2011iterative,gut02}, see Section~\ref{sec62} for details. It is used as a subroutine in Algorithm~\ref{alg:ALV-opt} and denoted by $\mathrm{Chebyshev}$, with its parameters passed as arguments. We stress here that the Chebyshev iteration is diverted from its usual use, which is solving linear systems, and is used here as a preconditioner (a similar idea appears in~\citet{bredies2015preconditioned}). Although Algorithm~\ref{alg:ALV-opt} runs  at every iteration a number $N$ of Chebyshev iterations, there is no approximation or truncation error here: Algorithm~\ref{alg:ALV-opt} converges to the exact solution $x^\star$ of Problem~\eqref{eq:original-pb}. This is achieved without solving the full linear system $\mL x=b$ at each iteration.

\begin{theorem}[Convergence of Algorithm~\ref{alg:ALV-opt}]
		\label{th:main}
		Consider $\lambda_1 \geq \lambda_{\max}(\mW)$ and $\lambda_2$ such that $0 < \lambda_2 \leq \lambda_{\min}^+(\mW)$. Let $\chi \eqdef \frac{\lambda_1}{\lambda_2}$ and choose $N \geq \sqrt{\chi}$.
		
		Set the parameters $\tau, \eta, \theta, \alpha$ as
		$\tau \eqdef \min \left\{1,\frac{1}{2}\sqrt{\frac{19}{15 \kappa} }\right\}
		$, $
			\eta \eqdef \frac{1} {4\tau L}$, $\theta \eqdef \frac{15}{19 \eta}$, and $\alpha \eqdef \mu$.
Then, there exists $C \geq 0$ such that
		\begin{align*}
			& \frac{1}{\eta}\sqN{x^k - x^\star} + 
			\frac{2(1-\tau)}{\tau}\bg_F(x_f^k,x^{\star})\\
			&\leq
			\left(1 + \frac{1}{4}\min\left\{\frac{15}{19},\sqrt{\frac{15}{19 \kappa}}\right\}\right)^{-k}
			C.
		\end{align*}
		Moreover, for every $\varepsilon >0$, Algorithm~\ref{alg:ALV-opt} finds $x^k$ for which $\sqn{x^k - x^\star} \leq \varepsilon$ using
	 $\cO\left(\sqrt{\kappa}\log(1/\varepsilon)\right)$  gradient computations and 
		$\cO\left(N\sqrt{\kappa}\log(1/\varepsilon)\right)$ matrix multiplications with $\mL$ or $\mL^T$. 
	\end{theorem}\smallskip
	
\begin{corollary}[Tight version of Theorem~\ref{th:main}]
\label{cor:main}
Set the parameters $\lambda_1,\lambda_2,N,\tau, \eta, \theta, \alpha$ to $\lambda_1 = \lambda_{\max}(\mW)$, $\lambda_2 = \lambda_{\min}^+(\mW)$, $N = \lceil \sqrt{\chi(\mW)} \rceil$,
		$\tau = \min \left\{1,\frac{1}{2}\sqrt{\frac{19}{15 \kappa} }\right\}
		$, $
			\eta = \frac{1} {4\tau L}$, $\theta = \frac{15}{19 \eta}$, and $\alpha = \mu$. Then, for every $\varepsilon >0$, Algorithm~\ref{alg:ALV-opt} finds $x^k$ for which $\sqn{x^k - x^\star} \leq \varepsilon$ using
	 $\cO\left(\sqrt{\kappa}\log(1/\varepsilon)\right)$  gradient computations and 
		$\cO\left(\sqrt{\kappa \chi(\mW)}\log(1/\varepsilon)\right)$ matrix multiplications with $\mL$ or $\mL^T$. 
\end{corollary}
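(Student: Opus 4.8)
The plan is simply to instantiate Theorem~\ref{th:main} at the specified parameter values and then simplify the resulting big-$\cO$ bound, so I expect no real obstacle beyond light bookkeeping.

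First I would verify that the stated choices are admissible in Theorem~\ref{th:main}. Taking $\lambda_1 = \lambda_{\max}(\mW)$ trivially satisfies $\lambda_1 \geq \lambda_{\max}(\mW)$, and $\lambda_2 = \lambda_{\min}^+(\mW) > 0$ satisfies $0 < \lambda_2 \leq \lambda_{\min}^+(\mW)$; consequently $\chi = \lambda_1/\lambda_2 = \chi(\mW)$. Since $N = \lceil \sqrt{\chi(\mW)}\,\rceil \geq \sqrt{\chi(\mW)} = \sqrt{\chi}$, the requirement $N \geq \sqrt{\chi}$ holds. The remaining parameters $\tau, \eta, \theta, \alpha$ are exactly those prescribed by Theorem~\ref{th:main}. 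Hence Theorem~\ref{th:main} applies verbatim.

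Theorem~\ref{th:main} then yields that, for every $\varepsilon > 0$, Algorithm~\ref{alg:ALV-opt} produces an iterate $x^k$ with $\sqn{x^k - x^\star} \leq \varepsilon$ after $\cO(\sqrt{\kappa}\log(1/\varepsilon))$ gradient computations and $\cO(N\sqrt{\kappa}\log(1/\varepsilon))$ multiplications by $\mL$ or $\mL^T$. It remains only to absorb $N$ into the bound: since $\chi(\mW) \geq 1$, we have $N = \lceil\sqrt{\chi(\mW)}\,\rceil \leq \sqrt{\chi(\mW)} + 1 \leq 2\sqrt{\chi(\mW)}$, so $N\sqrt{\kappa} = \cO(\sqrt{\kappa\,\chi(\mW)})$. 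Substituting this into the matrix-multiplication count gives $\cO(\sqrt{\kappa\,\chi(\mW)}\log(1/\varepsilon))$, while the gradient count is unchanged, which is the claim.

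The only point that deserves a word of care is that the number of matrix multiplications per outer iteration of Algorithm~\ref{alg:ALV-opt} is indeed $\cO(N)$: this comes from the single call to $\mathrm{Chebyshev}(\cdot,\mL,b,N,\lambda_1,\lambda_2)$, whose realization in Algorithm~\ref{alg:M9} performs one product with $\mL$ and one with $\mL^T$ per inner iteration over $N$ iterations, while no other line of Algorithm~\ref{alg:ALV-opt} applies $\mL$ or $\mL^T$. However, this accounting is already folded into the statement of Theorem~\ref{th:main}, so the corollary follows with nothing further to prove.
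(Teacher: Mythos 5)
Your proof is correct and follows exactly the route the paper intends: the corollary is stated there as a ``straightforward corollary'' of Theorem~\ref{th:main}, obtained by instantiating $\lambda_1,\lambda_2,N$ at their tight values and noting that $N=\lceil\sqrt{\chi(\mW)}\rceil=\cO(\sqrt{\chi(\mW)})$. Your verification of the hypotheses and the absorption of $N$ into the matrix-multiplication count are precisely the omitted bookkeeping; nothing further is needed.
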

The complexity result given by Corollary~\ref{cor:main} is summarized in Table~\ref{tab:rate}.  Algorithm~\ref{alg:ALV-opt} is a BBFO algorithm because each step of each iteration is a BBFO update.  Thus, the complexity of Algorithm~\ref{alg:ALV-opt} matches the lower bounds of Theorem~\ref{th:lb}, in terms of both gradient computations and matrix multiplications.

\section{Derivation of the Algorithm and Proof of Theorem~\ref{th:main}}
\label{sec6}
In this section, we explain how we derive our main algorithm from the PAPC algorithm and prove Theorem~\ref{th:main} step by step.
First, we derive the primal--dual optimality conditions associated to Problem~\eqref{eq:original-pb}.

\subsection{Primal--Dual Optimality Conditions}
First, note that $\argmin_{\mL x = b} F(x) = \argmin F(x) + \iota_{\{b\}}(\mL x)$. Define the strongly convex function $G : x \mapsto F(x) + \iota_{\{b\}}(\mL x)$. Then, $0 \in \partial G(x^{\star}) = \nabla F(x^{\star}) + \mL^T \partial \iota_{\{b\}}(\mL x^{\star})$~\cite[Theorem 16.47]{bau17}. This means that there exists $y^{\star} \in \partial \iota_{\{b\}}(\mL x^{\star})$ such that $0 = \nabla F(x^{\star}) + \mL^T y^{\star}$. Besides, $ \partial \iota_{\{b\}}(\mL x^{\star})$ is nonempty if and only if $\mL x^{\star} = b$. Finally, the pair $(x^{\star},y^{\star})$ must satisfy 
\begin{equation}
\label{eq:pd1}
\left\{\begin{array}{l}
    0 = \nabla F(x^{\star}) + \mL^T y^{\star},\\
    0 = -\mL x^{\star} + b.
\end{array}\right. 
\end{equation}
These equations are called primal--dual optimality conditions, and are also the first-order conditions associated to the Lagrangian function 
   $ \cL(x,y) \eqdef F(x) + \Dotprod{\mL x - b,y}$
associated to Problem~\eqref{eq:original-pb}. Moreover, $(x^{\star},y^{\star})$ is called an optimal primal--dual pair. If $(x^{\star},y^{\star})$ is an optimal primal--dual pair, then $(x^{\star},y^{\star} + \bar{y})$, where $\bar{y} \in \ker(\mL^T)$, is also an optimal primal--dual pair.
Thus, in the sequel, we denote by $(x^{\star},y^{\star})$ the only optimal primal--dual pair such that $y^{\star} \in \range(\mL)$; that is, such that 
\begin{equation}
\left\{\begin{array}{l}
    0 = \nabla F(x^{\star}) + \mL^T y^{\star}, \quad y^{\star} \in \range(\mL), \\
    0 = -\mL x^{\star} + b.
    \end{array}\right. \label{eq:pd}
\end{equation}
We can note that the sequence of iterates $(x^k,y^k)$ of the PAPC algorithm, shown in \eqref{eq:papc}, converges linearly to  $(x^\star,y^\star)$~\cite[Theorem 8]{sal20}, as reported in 
 Table~\ref{tab:rate}.

\subsection{Nesterov's Acceleration}
The first step to derive Algorithm~\ref{alg:ALV-opt} is to propose a variant of the PAPC~\eqref{eq:papc} using Nesterov's acceleration~\citep{nes04}. Nesterov acceleration is now
classical for proximal gradient descent but its extension to primal-dual settings remains an open area. 
This intermediate algorithm is Algorithm~\ref{alg:ALV}, shown above. Its convergence is stated in Proposition~\ref{th:ALV}. 

\begin{algorithm}[t]
    \caption{Intermediate algorithm}
	\label{alg:ALV}
	\begin{algorithmic}[1]
		\STATE {\bf Parameters:}  $x^0 \in \sX$,  $y^0 = 0_{\sY}$, 
		$\eta,\theta,\alpha>0$, $\tau \in (0,1)$
		\STATE Set $x_f^0 = x^0$
		\FOR{$k=0,1,2,\ldots$}{}
		\STATE $x_g^k \eqdef \tau x^k +  (1-\tau)x_f^k$\label{alg:ALV:line:x:1}
		\STATE $x^{k+\frac{1}{2}} \eqdef (1+\eta\alpha)^{-1}(x^k - \eta (\nabla F(x_g^k) - \alpha  x_g^k + \mL^T y^k))$\label{alg:ALV:line:x:2}
		\STATE $y^{k+1} \eqdef y^k  + \theta (\mL x^{k+\frac{1}{2}} - b)$ \label{alg:ALV:line:y}
		\STATE $x^{k+1} \eqdef (1+\eta\alpha)^{-1}(x^k - \eta (\nabla F(x_g^k) - \alpha  x_g^k + \mL^T y^{k+1}))$\label{alg:ALV:line:x:3}
		\STATE $x_f^{k+1} \eqdef x_g^k + \tfrac{2\tau}{2-\tau}(x^{k+1} - x^k)$\label{alg:ALV:line:x:4}
		\ENDFOR
	\end{algorithmic}
\end{algorithm}
\begin{proposition}[Algorithm~\ref{alg:ALV}]
		\label{th:ALV}
		Consider $\lambda_1 \geq \lambda_{\max}(\mW)$ and $\lambda_2$ such that $0 < \lambda_2 \leq \lambda_{\min}^+(\mW)$. Denote $\chi \eqdef \frac{\lambda_1}{\lambda_2}$.
		
		Set the parameters of Algorithm~\ref{alg:ALV} as $\tau \eqdef \min \left\{1,\frac{1}{2}\sqrt{\frac{\chi}{\kappa} }\right\}
		$, $
			\eta \eqdef \frac{1} {4\tau L}$, $\theta \eqdef \frac{1}{\eta\lambda_1}$, and $\alpha \eqdef \mu$.
		Then,
		\begin{align}
		&\frac{1}{\eta}\sqN{x^k - x^{\star}} + \frac{\eta\alpha}{\theta(1+\eta\alpha)}\sqN{y^k - y^{\star}} \\
		&+
			\frac{2(1-\tau)}{\tau}\bg_F(x_f^k, x^{\star})
			\leq
			\left(\!1 + \frac{1}{4}\min\left\{\frac{1}{\sqrt{\kappa\chi}},\frac{1}{\chi}\right\}\!\right)^{-k}
			C,\notag
		\end{align}
		where
		$C \eqdef \frac{1}{\eta}\sqN{x^0 - x^{\star}} + \frac{1}{\theta}\sqn{y^0 - y^{\star}} +
		\frac{2(1-\tau)}{\tau}\bg_F(x_f^0, x^{\star}).
	$
\end{proposition}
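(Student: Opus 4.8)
The plan is to establish a one-step contraction for a carefully chosen Lyapunov function, mimicking the analysis of Nesterov's accelerated gradient method but extended to the primal--dual saddle point. I would define
\[
\Psi^k \eqdef \frac{1}{\eta}\sqN{x^k - x^{\star}} + \frac{1}{\theta}\sqN{y^k - y^{\star}} + \frac{2(1-\tau)}{\tau}\bg_F(x_f^k, x^{\star}),
\]
so that $C = \Psi^0$, and aim to show $\Psi^{k+1} \leq (1+\rho)^{-1}\Psi^k$ (plus a controllable leftover term) with $\rho = \tfrac14\min\{(\kappa\chi)^{-1/2},\chi^{-1}\}$; since the coefficient of the dual term in the claimed bound is $\frac{\eta\alpha}{\theta(1+\eta\alpha)}\le\frac{1}{\theta}$ while we start from $\frac{1}{\theta}\sqn{y^0-y^\star}$, proving contraction of $\Psi^k$ suffices. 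First I would introduce the exact dual variable: observe that the updates of $x^{k+1/2}$ and $x^{k+1}$ in Algorithm~\ref{alg:ALV} can be written as proximal-type steps and, combined with the primal--dual optimality conditions \eqref{eq:pd}, they give identities of the form $\frac{1}{\eta}(x^k - x^{k+1}) = \frac{1+\eta\alpha}{\eta}(x^{k+1}-x^{k+1/2}) + \nabla F(x_g^k) - \alpha x_g^k + \mL^T y^{k+1}$, etc. These are the algebraic "reconstruction" identities that let one expand $\sqN{x^{k+1}-x^\star}$ against the constraint.

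The core is then a sequence of estimates carried out in the order: (i) use $\mu$-strong convexity and $L$-smoothness of $F$ together with the definition $x_g^k = \tau x^k + (1-\tau) x_f^k$ and the momentum update $x_f^{k+1} = x_g^k + \tfrac{2\tau}{2-\tau}(x^{k+1}-x^k)$ to relate $\bg_F(x_f^{k+1},x^\star)$ to $\bg_F(x_f^k,x^\star)$, $F(x_g^k)-F(x^\star)$, and inner products involving $\nabla F(x_g^k)$; this is the standard estimating-sequence computation for AGD, and the choices $\eta = \frac{1}{4\tau L}$ and $\tau = \tfrac12\sqrt{\chi/\kappa}$ are tuned to make it close. (ii) Expand $\frac1\eta\sqN{x^{k+1}-x^\star}$ using the reconstruction identity; the cross terms produce $-\langle \nabla F(x_g^k) - \alpha x_g^k + \mL^T y^{k+1}, x^{k+1}-x^\star\rangle$ contributions and a $+\alpha\sqN{x^{k+1}-x^\star}$-type term absorbed by the $\frac{\eta\alpha}{1+\eta\alpha}$ factor coming from $(1+\eta\alpha)^{-1}$. (iii) Expand $\frac1\theta\sqN{y^{k+1}-y^\star}$ using line~\ref{alg:ALV:line:y}: since $y^{k+1}-y^k = \theta(\mL x^{k+1/2} - b) = \theta\mL(x^{k+1/2}-x^\star)$, this yields $\frac1\theta\sqN{y^{k+1}-y^\star} = \frac1\theta\sqN{y^k-y^\star} + 2\langle \mL(x^{k+1/2}-x^\star), y^{k+1}-y^\star\rangle - \theta\sqN{\mL(x^{k+1/2}-x^\star)}$, and $\langle \mL(x^{k+1/2}-x^\star),y^\star\rangle = \langle x^{k+1/2}-x^\star, \mL^T y^\star\rangle$ is used to couple with the primal terms via $\mL^T y^\star = -\nabla F(x^\star)$. (iv) Sum everything; the $\mL^T y^{k+1}$ and $\mL^T y^\star$ cross terms between the primal and dual expansions must cancel, leaving a negative semidefinite quadratic form in the increments that we need to dominate — this is where $\theta \le \frac{1}{\eta\lambda_1}$ (equivalently $\theta\lambda_{\max}(\mW)\eta \le 1$) is exactly what guarantees the leftover quadratic $-\theta\sqN{\mL(\cdot)} + \frac{1}{\eta}\langle\cdot,\cdot\rangle$ stays non-positive.

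The main obstacle I anticipate is step (iv): getting the primal and dual cross terms to telescope cleanly so that what remains is a genuinely negative quadratic form, and then verifying that the residual terms (the gap between $x^{k+1/2}$ and $x^{k+1}$, and between $x^{k+1}$, $x^k$, $x_g^k$) can be charged either against $\bg_F$ contributions from smoothness or against the strong-convexity surplus $\alpha\sqN{\cdot}$ with the specific constants $\tau,\eta,\theta,\alpha$ chosen in the statement. Concretely one needs an inequality of the shape $\Psi^{k+1} + (\text{nonneg. quadratic}) \le (1+\rho)^{-1}\Psi^k$, and matching the rate $\rho = \tfrac14\min\{(\kappa\chi)^{-1/2},\chi^{-1}\}$ forces delicate bookkeeping: the two branches of the min correspond to whether $\tau = \tfrac12\sqrt{\chi/\kappa} < 1$ (so $\sqrt{\kappa\chi}$ governs, via $\rho \sim \tau\cdot\text{const}$ against the $\chi$-sized dual contraction $\eta\alpha\theta^{-1}$-type factor) or $\tau = 1$ (the $F$ well-conditioned regime, where the $\chi^{-1}$ rate is dictated purely by the linear-system part). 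I would handle the algebra by keeping all increments symbolic, collecting the quadratic form in the variables $(x^{k+1}-x^k,\ x^{k+1/2}-x^{k+1},\ \mL(x^{k+1/2}-x^\star),\ y^{k+1}-y^k)$, and invoking Young's inequality only once, with the split dictated by the target rate. The accelerated-gradient part (steps (i)--(ii)) I expect to be routine given \cite{nes04}; the genuinely new content is the compatibility between the Nesterov momentum and the dual ascent, and it is entirely contained in the cancellation of $\mL^T y$ cross terms together with the spectral bound $\theta \le 1/(\eta\lambda_1)$.
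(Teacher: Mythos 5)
Your overall architecture (Lyapunov function combining a weighted primal--dual distance with a Bregman term, AGD-style estimating sequences for the $x_f$ part, skew-symmetric cancellation of the $\mL^T y$ cross terms, and the spectral condition $\eta\theta\lambda_{\max}(\mW)\le 1$) matches the paper's proof. But there is a genuine gap at the heart of step (iv): nowhere in your ledger of quadratic forms --- which you collect in the variables $(x^{k+1}-x^k,\ x^{k+1/2}-x^{k+1},\ \mL(x^{k+1/2}-x^\star),\ y^{k+1}-y^k)$ --- is there a source for a term of the form $-c\,\sqn{y^{k+1}-y^{\star}}$. Strong convexity of $F$ supplies the contraction of the primal blocks, but the dual update $y^{k+1}=y^k+\theta(\mL x^{k+1/2}-b)$ is a pure ascent step with no damping, so making the leftover quadratic ``non-positive'' (your stated goal for $\theta\le 1/(\eta\lambda_1)$) only yields non-expansiveness in $y$, not the geometric rate $\bigl(1+\tfrac14\min\{(\kappa\chi)^{-1/2},\chi^{-1}\}\bigr)^{-1}$ for the full Lyapunov function. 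The paper's mechanism for this is its Lemma~4: using the $x$-update identity together with $\nabla F(x^{\star})+\mL^T y^{\star}=0$, one \emph{lower}-bounds $\sqn{x^{k+1}-x^k}$ by $\tfrac{\eta^2}{2}\sqn{\mL^T(y^{k+1}-y^{\star})}$ minus error terms controlled by $\bg_F(x_g^k,x^{\star})$ and $\sqn{x^{k+1}-x^{\star}}$; i.e., half of the available $-\tfrac{1}{2\eta}\sqn{x^{k+1}-x^k}$ is sacrificed to manufacture $-\tfrac{\eta}{4}\sqn{\mL^T(y^{k+1}-y^{\star})}$. One then needs the observation (absent from your plan) that $y^k\in\range(\mL)$ for all $k$ (by induction from $y^0=0$, $b\in\range(\mL)$) and $y^{\star}\in\range(\mL)$ by the normalization \eqref{eq:pd}, so that $\sqn{\mL^T(y^{k+1}-y^{\star})}\ge\lambda_2\sqn{y^{k+1}-y^{\star}}$. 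This is precisely where $\lambda_2$, and hence the factor $\chi$ in both branches of the rate, enters; without it the claimed rate cannot be obtained.

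A secondary point: your Lyapunov function weights the dual block by $\tfrac1\theta\mI$, whereas the paper uses the metric $\mQ$ with dual block $\tfrac1\theta\mI-\tfrac{\eta}{1+\eta\alpha}\mL\mL^T$. This is not cosmetic. The exact skew-symmetric cancellation you rely on requires the dual residual to read $\mL x^{k+1}-b$, matching the $x^{k+1}$ appearing in the primal cross term; but the algorithm produces $\mL x^{k+1/2}-b$, and the correction $\mL x^{k+1/2}-\tfrac{\eta}{1+\eta\alpha}\mL\mL^T(y^{k+1}-y^k)=\mL x^{k+1}$ is supplied exactly by the off-diagonal part of $\mQ$ (this is the content of the paper's Lemma~2). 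With the plain diagonal weight you would instead face an uncancelled mismatch term that must be absorbed by an extra Young step, degrading constants and complicating the already delicate bookkeeping you flag. Since $\tfrac{\eta\alpha}{\theta(1+\eta\alpha)}\sqn{y}\le\sqN{(0,y)}_{\mQ}\le\tfrac1\theta\sqn{y}$ under $\eta\theta\lambda_{\max}(\mW)\le1$, contracting the $\mQ$-norm still delivers the asymmetric statement of the Proposition, so you lose nothing by adopting it.
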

Proposition~\ref{th:ALV} states the linear convergence of the distance between the iterates and the primal--dual optimal point. In particular, if $\lambda_1 = \lambda_{\max}(\mW)$ and $\lambda_2 = \lambda_{\min}^+(\mW)$, then $\|x-x^\star\|^2 \leq \varepsilon$ after
\begin{equation*}\cO\left(\left(\sqrt{\kappa \chi(\mW)} + \chi(\mW)\right)\log\left({\textstyle\frac{1}{\varepsilon}}\right)\right)
\end{equation*} 
gradient computations and matrix multiplications. Besides, Proposition~\ref{th:ALV} states the linear convergence of the Bregman divergence of $F$. Using~\eqref{eq:pd}, one can check that the Bregman divergence of $F$ is equal to the restricted primal--dual gap, in particular: $\bg_F(x_f^k,x^\star) = \cL(x_f^k,y^\star) - \cL(x^\star,y^k)$.

The proof of Proposition~\ref{th:ALV} is provided in the Supplementary Material. The main tool of the proof is the following representation of Algorithm~\ref{alg:ALV}.

We denote by $\mQ$  
the $(d+p)\times(d+p)$ matrix defined blockwise by
\begin{equation}
\label{eq:P}
		\mQ \eqdef \begin{bmatrix}
		\frac{1}{\eta}\mI_\sX & 0\\
		0&\frac{1}{\theta}\mI_\sY - \frac{\eta}{1+\eta\alpha}\mL \mL^T
		\end{bmatrix},
	\end{equation}
where $\mI_\sX$ (resp. $\mI_{\sY}$) is the identity matrix over $\sX$ (resp. $\sY$). 
\begin{lemma}
\label{lem:FB}
The following equality holds:
	\begin{equation}\label{ALV:eq:xy1}
		\mQ \begin{bmatrix}
		x^{k+1} - x^k\\y^{k+1} - y^k
		\end{bmatrix}
		\!=\!
		\begin{bmatrix}
			\alpha (x_g^k - x^{k+1}) - (\nabla F(x_g^k) + \mL^T y^{k+1})\\
			\mL x^{k+1} - b
		\end{bmatrix}.
	\end{equation}
    
\end{lemma}
Lemma~\ref{lem:FB}, proved in the Supplementary Material, enables to view Algorithm~\ref{alg:ALV} as a variant of the Forward--Backward algorithm involving monotone operators, see~\cite[Section 26.14]{bau17} or~\citep{con19} for more details. The Forward--Backward algorithm is a fixed-point algorithm. For instance, one can see in Equation~\eqref{ALV:eq:xy1} that a fixed point $(x^k,y^k) = (x^{\star},y^{\star})$ is a solution to~\eqref{eq:pd1}. Hence, Algorithm~\ref{alg:ALV} can be viewed as an accelerated primal--dual fixed-point algorithm.

\subsection{Chebyshev's Acceleration}\label{sec62}
Our main Algorithm~\ref{alg:ALV-opt} is obtained as a particular instantiation of Algorithm~\ref{alg:ALV}. More precisely, we use a finite number of steps of the Chebyshev iteration~\citep{fla50,gol83,auzinger2011iterative,gut02} to precondition the linear system and accelerate the resolution of Problem~\eqref{eq:original-pb}. This idea was already applied in  the particular setting of decentralized optimization~\citep{scaman2017optimal,scaman2018optimal}.

Consider a polynomial $\mP$ 
such that, for every eigenvalue $t$ of $\mW$, $\mP(t) \geq 0$ and ($\mP(t) = 0 \Leftrightarrow t = 0$).
Since $\mL x^{\star} = b$, 
\begin{align*}
    \mL x = b
    &\Leftrightarrow\  \mL (x-x^{\star}) = 0
    \Leftrightarrow\ \mL^T \mL (x-x^{\star}) = 0\\
    &
    \Leftrightarrow\ \mW (x-x^{\star}) = 0
    \Leftrightarrow\ \mP(\mW) (x-x^{\star}) = 0\\
&    \Leftrightarrow\ \sqrt{\mP(\mW)} (x-x^{\star}) = 0\\
&    \Leftrightarrow\ \sqrt{\mP(\mW)} x = \sqrt{\mP(\mW)} x^{\star}.
\end{align*}
Therefore, the problem
\begin{equation}
	\label{eq:equiv-pb}
	\min_{x \in \sX}\  F(x) \quad \text{s.t.}\quad \sqrt{\mP(\mW)} x = \sqrt{\mP(\mW)} x^{\star},
\end{equation}
is equivalent to Problem~\eqref{eq:original-pb}. 
Consequently, to solve Problem~\eqref{eq:original-pb}, one can apply Algorithm~\ref{alg:ALV} by replacing $\mL$ by $\sqrt{\mP(\mW)}$ and $b$ by $\sqrt{\mP(\mW)} x^{\star}$; we will see below that $x^\star$ is not needed in the computations, only $b$ is.  Since $\sqrt{\mP(\mW)}$ is symmetric, this leads to the following algorithm:
\begin{equation}
   \left\lfloor\begin{array}{r@{\,}l}
    x_g^k &\eqdef \tau x^k +  (1-\tau)x_f^k\\
		x^{k+\frac{1}{2}} &\eqdef (1+\eta\alpha)^{-1}\big(x^k - \eta (\nabla F(x_g^k) - \alpha  x_g^k \\
		&\quad+ \sqrt{\mP(\mW)} y^k)\big)\\
		y^{k+1} &\eqdef y^k  + \theta \big(\sqrt{\mP(\mW)} x^{k+\frac{1}{2}} - \sqrt{\mP(\mW)} x^{\star}\big)\\
		x^{k+1} &\eqdef (1+\eta\alpha)^{-1}\big(x^k - \eta (\nabla F(x_g^k) - \alpha  x_g^k \\
		&\quad{}+ \sqrt{\mP(\mW)} y^{k+1})\big)\\
		x_f^{k+1} &\eqdef x_g^k + \tfrac{2\tau}{2-\tau}(x^{k+1} - x^k)
		\end{array}\right. .
\end{equation}
After applying the change of variable $u^k \eqdef 
\sqrt{\mP(\mW)} y^k$, we get:
\begin{equation}
\label{eq:algo-equiv}
\left\lfloor\begin{array}{r@{\,}l}
    x_g^k &\eqdef \tau x^k +  (1-\tau)x_f^k\\
		x^{k+\frac{1}{2}} &\eqdef (1+\eta\alpha)^{-1}\big(x^k - \eta (\nabla F(x_g^k) - \alpha  x_g^k + u^k)\big)\\
		u^{k+1} &\eqdef u^k  + \theta \big({\mP(\mW)} x^{k+\frac{1}{2}} - {\mP(\mW)} x^{\star}\big)\\
		x^{k+1} &\eqdef (1+\eta\alpha)^{-1}\big(x^k - \eta (\nabla F(x_g^k) - \alpha  x_g^k + u^{k+1})\big)\\
		x_f^{k+1} &\eqdef x_g^k + \tfrac{2\tau}{2-\tau}(x^{k+1} - x^k).\end{array}\right. 
\end{equation}

To obtain Algorithm~\ref{alg:ALV-opt} and Theorem~\ref{th:main}, we have to choose a suitable polynomial $\mP$ and show how to compute ${\mP(\mW)} x^{k+\frac{1}{2}} - {\mP(\mW)} x^{\star}$ efficiently. 

\subsubsection{Choice of $\mP$}
\label{sec:chiP}
The goal is to make Problem~\eqref{eq:equiv-pb} better conditioned than Problem~\eqref{eq:original-pb}. For this, we want $\mP$ to cluster all the positive eigenvalues of $\mW$ around the same value, say 1 (the scaling of $\mP$ does not matter, since it is compensated by the stepsizes). To that aim, the best choice is to set $\mP$ as 1 minus a Chebyshev polynomial of appropriate degree~\cite[Theorem 6.1]{auzinger2011iterative}. More precisely, let $\boldsymbol{T}_n$ be the Chebyshev polynomial of the first kind of degree $n\geq 0$, which is such that $\{ \boldsymbol{T}_n(t)\ :\ t\in [-1,1] \} = [-1,1]$. Let $\lambda_1 \geq \lambda_{\max}(\mW)$ and 
$0<\lambda_2 \leq \lambda_{\min}^+(\mW)$ be upper and lower bounds of the eigenvalues of $\mW$. Set $\chi \eqdef  \lambda_1/\lambda_2\geq \chi(\mW) \geq 1$. 

 If $\lambda_1=\lambda_2$, no preconditioning is necessary and we could just set $\mP(\mW)=\mW$. 
So, let us assume that $\lambda_2<\lambda_1$ (the derivations can be shown to be still valid if  $\lambda_2=\lambda_1$).

For every $n\geq 1$, we define the shifted Chebyshev polynomial $\widetilde{\boldsymbol{T}}_n$ as
\begin{equation}
\widetilde{\boldsymbol{T}}_n(t)=\frac{\boldsymbol{T}_n\big((\lambda_1+\lambda_2-2t)/(\lambda_1-\lambda_2)\big)}{\boldsymbol{T}_n\big((\lambda_1+\lambda_2)/(\lambda_1-\lambda_2)\big)}.
\end{equation}
Then, for every $n \geq 1$, $\widetilde{\boldsymbol{T}}_n(0)=1$, 
$\widetilde{\boldsymbol{T}}_n(t)$ decreases monotonically for $t\in [0,\lambda_2]$, and
\begin{align}
\max_{t\in [\lambda_2,\lambda_1]} |\widetilde{\boldsymbol{T}}_n(t)| &= \frac{1}{\boldsymbol{T}_n\big((\lambda_1+\lambda_2)/(\lambda_1-\lambda_2)\big)} \\
=&  \frac{2\zeta^n}{1+\zeta^{2n}}<1,\ \mbox{where}\  \zeta=\frac{\sqrt{\chi}-1}{\sqrt{\chi}+1}<1,\notag
\end{align}
see~\cite[Corollary 6.1]{auzinger2011iterative}.
Hence, if $N \geq \sqrt{\chi}$, then 
\begin{equation}
\label{eq:cheb}
\max_{t\in [\lambda_2,\lambda_1]} |\widetilde{\boldsymbol{T}}_N(t)| < 0.266 <\frac{4}{15}.
\end{equation}
Indeed,
$-1/\ln((t-1)/(t+1))<t/2$ for every $t\geq 1$, therefore by setting $t = \sqrt{\chi}$ we obtain $N\geq \sqrt{\chi} \Rightarrow N > -2/\ln(\zeta) \Rightarrow \zeta^N < e^{-2} \Rightarrow 2\zeta^N/(1+\zeta^{2N})<0.266$.  

Therefore, we set
\begin{equation}
\label{eq:defP}
\mP \eqdef 1-\widetilde{\boldsymbol{T}}_N
\end{equation}
for some $N\geq \sqrt{\chi}$. Then, we have 
\begin{align*}
\label{eq:chiP}
\lambda_{\max}(\mP(\mW)) &\leq \!\max_{t\in [\lambda_2,\lambda_1]} \!\mP(t) \leq 1+\!\!\max_{t\in [\lambda_2,\lambda_1]} |\widetilde{\boldsymbol{T}}_N(t)| \leq \frac{19}{15},\\
\lambda_{\min}^+(\mP(\mW)) &\geq \!\min_{t\in [\lambda_2,\lambda_1]}\! \mP(t) \geq 1 -\!\!\max_{t\in [\lambda_2,\lambda_1]} |\widetilde{\boldsymbol{T}}_N(t)| \geq \frac{11}{15},\\
\chi\big(\mP(\mW)\big)&\leq \frac{19}{11}.
\end{align*}

\subsubsection{Efficient Computation of ${\mP(\mW)}x- {\mP(\mW)} x^{\star}$ without Knowing $x^\star$}
\label{sec:algo-equiv-P}
We still have to show how to compute ${\mP(\mW)}x- {\mP(\mW)} x^{\star}$, for any $x\in \sX$. 
Consider $N \geq 1$ and $\mP$ defined in~\eqref{eq:defP}. Now, we can observe that Algorithm~\ref{alg:ALV-opt} is equivalent to the iterations~\eqref{eq:algo-equiv}, and there remains to prove that for every $x \in \sX$,
\begin{equation}
    \mP(\mW)x - \mP(\mW)x^\star = x - \mathrm{Chebyshev}(x,\mL,b,N,\lambda_1,\lambda_2).\label{eqch11}
\end{equation}
The vector $z^N = \mathrm{Chebyshev}(x,\mL,b,N)$ is the $N^{\text{th}}$ iterate of the classical Chebyshev iteration to solve the linear system $\mL z= b$, or equivalently $\mW z = \mL^T b$, starting with some initial guess $z^0=x$, using the recurrence relation of the Chebyshev polynomials $\widetilde{\boldsymbol{T}}_N$, see Algorithm 4 in \citet{gut02}\footnote{Several recurrence relations  can be used to compute $\widetilde{\boldsymbol{T}}_n$, and we chose Algorithm 4 in \citet{gut02} because it is proved to be numerically stable.}. The rest of the proof is given in the Supplementary Material.

\section{Experiments}
\label{sec:num}
\begin{figure*}[t]
\centering
$\!\!\!\!$\includegraphics[scale=0.80]{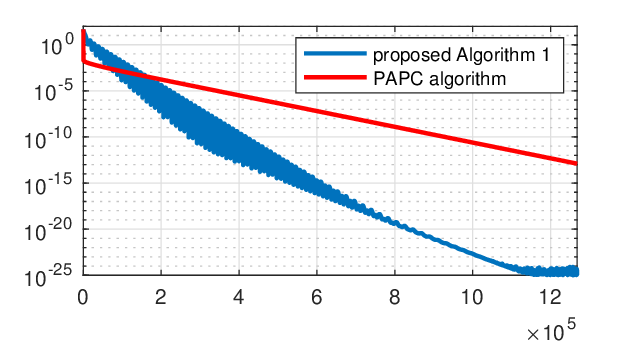}$\!\!$\raisebox{10pt}{\includegraphics[scale=0.80]{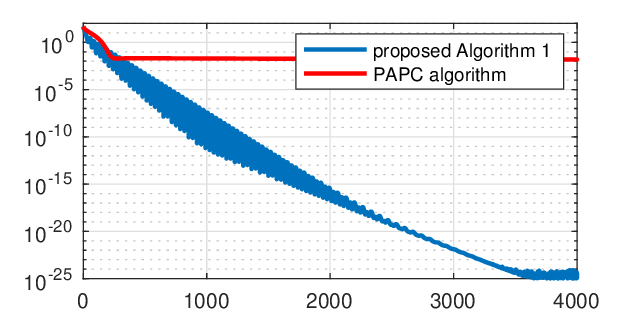}}
\caption{Error $\|x-x^\star\|^2$ with respect to the number of calls to $\mL$ and $\mL^T$ to obtain $x$ (left) and to the number of calls to $\nabla F$, equal to the number $k$ of iterations, to obtain $x=x^k$  (right).}
\label{fig1}
\end{figure*}

We illustrate the performance of our Algorithm~\ref{alg:ALV-opt} in a compressed-sensing-type experiment: we want to estimate a sparse vector $x^\sharp\in \sX=\mathbb{R}^d$, with $d=1000$, having $50$ randomly chosen nonzero elements (equal to 1) from $b=\mL x^\sharp \in \sY=\mathbb{R}^p$, with $p=250$, where $\mL$ has random i.i.d. Gaussian elements and its nonzero singular values are modified so that they span the interval $[1/\sqrt{\chi},1]$ for some prescribed value of $\chi$.
Solving Problem~\eqref{eq:original-pb} with $F$ the $\ell_1$ norm yields perfect reconstruction with $x^\star=x^\sharp$. Thus, without pretending in any way that this is the best way to solve this estimation problem, we solve Problem~\eqref{eq:original-pb} with $F$ a $L$-smooth and $\mu$-strongly convex approximation of the $\ell_1$ norm: we set $F:x=(x_i)_{i=1}^d \in \sX \mapsto \sum_{i=1}^d f(x_i)$ with $f:t\in\mathbb{R}\mapsto \sqrt{t^2+e^2}+(e/2) t^2$, for some $e>0$, so that $L=1/e+e$, $\mu=e$, $\kappa=L/\mu=1+1/e^2$. So, given a prescribed value of $\kappa$, we set $e = \sqrt{1/(\kappa-1)}$. The results are shown in Figure~\ref{fig1} for 
Algorithm~\ref{alg:ALV-opt} and the PAPC algorithm, 
for $\chi=10^5$ and $\kappa=10^4$; other values gave similar plots. The computation time is roughly the same as the number of calls to $\mL$ and $\mL^T$ here.

Both algorithms converge linearly, but Algorithm~\ref{alg:ALV-opt} has a much better rate, which corresponds visually to the slope of the curves  in Figure~\ref{fig1}. 
Algorithm~\ref{alg:ALV-opt} makes $N = 317$ calls to $\mL$ and $\mL^T$ corresponding to the Chebyshev ‘inner loop’ between two gradient evaluations. It needs less
gradient calls than PAPC to achieve the same accuracy. The red curve in the right plot is the same as in the left plot, but stretched horizontally by a factor $N = 317$ (note the change in horizontal scale).

\section*{Acknowledgements}

Adil Salim was supported by KAUST and by a Simons-Berkeley Research Fellowship.

\bibliographystyle{abbrvnat}
\bibliography{ref.bib,biblio.bib}

\clearpage
\appendix
\part*{Appendix}

\section{Proof of Proposition~\ref{th:ALV}}
We denote by $\|\cdot\|_{\mQ}$ (resp. $\Dotprod{\cdot,\cdot}_{\mQ}$) the norm (resp. inner product) induced by $\mQ$, defined in~\eqref{eq:P}. The norm $\|\cdot\|_{\mQ}$ satisfies the following properties, stated as lemmas. 

\subsection{Preliminary Lemmas}
\begin{lemma}
	If the parameters $\eta > 0$ and $\theta > 0$ satisfy
	\begin{equation}\label{ALV:eq:etatheta}
		\eta\theta\lambda_{\max}(\mW) \leq 1,
	\end{equation}
	and if $\alpha > 0$, then the symmetric matrix $\mQ$ is positive definite and for every $x \in \sX$, $y \in \sY$, the following inequality holds:
	\begin{equation}
	\label{ALV:eq:Pbound}
		\frac{1}{\eta}\sqn{x} \leq \frac{1}{\eta}\sqn{x} + \frac{\eta\alpha}{\theta(1+\eta\alpha)}\|y\|^2 \leq \sqN{\begin{bmatrix}x\\y\end{bmatrix}}_\mQ \leq \frac{1}{\eta}\sqn{x} + \frac{1}{\theta}\sqn{y}.
	\end{equation}
\end{lemma}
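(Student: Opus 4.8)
The statement to prove is the preliminary lemma giving positive definiteness of $\mQ$ and the sandwich inequality \eqref{ALV:eq:Pbound}, under the stepsize condition \eqref{ALV:eq:etatheta}.

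The plan is to exploit the block-diagonal structure of $\mQ$. Since $\mQ = \begin{bmatrix} \frac{1}{\eta}\mI_\sX & 0 \\ 0 & \frac{1}{\theta}\mI_\sY - \frac{\eta}{1+\eta\alpha}\mL\mL^T \end{bmatrix}$, for any $x\in\sX$, $y\in\sY$ we have the exact identity
\[
\sqN{\begin{bmatrix}x\\y\end{bmatrix}}_\mQ = \frac{1}{\eta}\sqn{x} + \frac{1}{\theta}\sqn{y} - \frac{\eta}{1+\eta\alpha}\sqn{\mL^T y}.
\]
So everything reduces to controlling the term $\frac{\eta}{1+\eta\alpha}\sqn{\mL^T y}$. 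The first step is to record that $\sqn{\mL^T y} = \Dotprod{\mL\mL^T y, y} \le \lambda_{\max}(\mL\mL^T)\sqn{y} = \lambda_{\max}(\mW)\sqn{y}$, using that $\mL\mL^T$ and $\mL^T\mL = \mW$ have the same nonzero eigenvalues, hence the same largest eigenvalue.

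The second step gives the \emph{upper} bound in \eqref{ALV:eq:Pbound}: since $\frac{\eta}{1+\eta\alpha}\sqn{\mL^T y} \ge 0$, we immediately get $\sqN{[x;y]}_\mQ \le \frac{1}{\eta}\sqn{x} + \frac{1}{\theta}\sqn{y}$. The third step gives the \emph{lower} bound: combining the identity above with $\sqn{\mL^T y}\le \lambda_{\max}(\mW)\sqn{y}$,
\[
\sqN{\begin{bmatrix}x\\y\end{bmatrix}}_\mQ \ge \frac{1}{\eta}\sqn{x} + \left(\frac{1}{\theta} - \frac{\eta\lambda_{\max}(\mW)}{1+\eta\alpha}\right)\sqn{y}.
\]
It remains to check $\frac{1}{\theta} - \frac{\eta\lambda_{\max}(\mW)}{1+\eta\alpha} \ge \frac{\eta\alpha}{\theta(1+\eta\alpha)}$, i.e. after multiplying by $\theta(1+\eta\alpha)>0$, that $(1+\eta\alpha) - \eta\theta\lambda_{\max}(\mW) \ge \eta\alpha$, i.e. $\eta\theta\lambda_{\max}(\mW)\le 1$ — which is exactly hypothesis \eqref{ALV:eq:etatheta}. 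The leftmost inequality $\frac{1}{\eta}\sqn{x}\le \frac{1}{\eta}\sqn{x}+\frac{\eta\alpha}{\theta(1+\eta\alpha)}\sqn{y}$ is trivial since $\alpha,\eta,\theta>0$. Finally, positive definiteness of $\mQ$ follows from the lower bound: if $(x,y)\neq 0$ then $\sqN{[x;y]}_\mQ \ge \frac{1}{\eta}\sqn{x}+\frac{\eta\alpha}{\theta(1+\eta\alpha)}\sqn{y} > 0$; and $\mQ$ is symmetric by inspection.

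There is no real obstacle here — the only point requiring a moment's care is the passage from $\sqn{\mL^T y}$ to $\lambda_{\max}(\mW)\sqn{y}$, i.e. remembering that $\mL\mL^T$ and $\mL^T\mL$ share their largest eigenvalue, so that the condition \eqref{ALV:eq:etatheta}, stated in terms of $\mW = \mL^T\mL$, is the right one. The rest is bookkeeping with positive scalars.
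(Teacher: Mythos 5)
Your proof is correct and follows essentially the same route as the paper: expand $\sqN{[x;y]}_\mQ$ using the block-diagonal structure, bound $\sqn{\mL^T y}$ by $\lambda_{\max}(\mW)\sqn{y}$ via the shared nonzero spectrum of $\mL\mL^T$ and $\mL^T\mL$, and invoke $\eta\theta\lambda_{\max}(\mW)\le 1$ to obtain the coefficient $\frac{\eta\alpha}{\theta(1+\eta\alpha)}$. The only difference is cosmetic: the paper absorbs the hypothesis one step earlier to get the bound $\frac{1}{\theta}-\frac{1}{\theta(1+\eta\alpha)}$ directly, whereas you verify the equivalent algebraic inequality at the end.
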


\begin{proof}
The nonzero eigenvalues of $\mW = \mL^T \mL$ are the nonzero eigenvalues of $\mL \mL^T$, therefore $\lambda_{\max}(\mW) = \lambda_{\max}(\mL \mL^T)$. Consequently, using~\eqref{ALV:eq:etatheta},
\begin{equation*}
\frac{\eta}{1+\eta\alpha}\sqn{\mL^T y} \leq \frac{\eta}{1+\eta\alpha}\lambda_{\max}(\mW)\sqn{y} \leq \frac{\sqn{y}}{\theta(1+\eta\alpha)}.
\end{equation*}
    Therefore, since $\alpha \eta > 0$,
    \begin{equation*}
        \frac{1}{\eta}\sqn{x} + \left(\frac{1}{\theta} - \frac{1}{\theta(1+\eta\alpha)} \right)\sqn{y} \leq  \sqN{\begin{bmatrix}x\\y\end{bmatrix}}_\mQ = \frac{1}{\eta}\sqn{x} + \frac{1}{\theta}\sqn{y} - \frac{\eta}{1+\eta\alpha}\sqn{\mL^T y},
    \end{equation*}
    which proves in particular that $\mQ$ is positive definite.
\end{proof}

Besides, lines 5 to 7 of Algorithm~\ref{alg:ALV} admit the following representation, which is at the core of the convergence proof.
\begin{lemma}
The following equality holds:
	\begin{equation}\label{ALV:eq:xy}
		\mQ \begin{bmatrix}
		x^{k+1} - x^k\\y^{k+1} - y^k
		\end{bmatrix}
		=
		\begin{bmatrix}
			\alpha (x_g^k - x^{k+1}) - (\nabla F(x_g^k) + \mL^T y^{k+1})\\
			\mL x^{k+1} - b
		\end{bmatrix}.
	\end{equation}
    
\end{lemma}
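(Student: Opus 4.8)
The plan is to verify the two block rows of \eqref{ALV:eq:xy} separately by direct substitution of the updates in lines~\ref{alg:ALV:line:x:2}--\ref{alg:ALV:line:x:3} of Algorithm~\ref{alg:ALV}. Since $\mQ$ is block diagonal, the first row only involves the $x$-updates and the scalar $\tfrac{1}{\eta}$, whereas the second row couples the $y$-update with the correction term $\tfrac{\eta}{1+\eta\alpha}\mL\mL^T$ sitting in the lower-right block of $\mQ$.

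For the first row, I would start from line~\ref{alg:ALV:line:x:3} in the form $(1+\eta\alpha)x^{k+1} = x^k - \eta\big(\nabla F(x_g^k) - \alpha x_g^k + \mL^T y^{k+1}\big)$, move the $\eta\alpha x^{k+1}$ term to the right-hand side, and collect the two $\alpha$-terms. This gives $x^{k+1} - x^k = \eta\big(\alpha(x_g^k - x^{k+1}) - \nabla F(x_g^k) - \mL^T y^{k+1}\big)$, and dividing by $\eta$ yields exactly the first component of the right-hand side of \eqref{ALV:eq:xy}.

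For the second row, line~\ref{alg:ALV:line:y} gives $\tfrac{1}{\theta}(y^{k+1} - y^k) = \mL x^{k+\frac12} - b$, so the left-hand side of the second block equals $\mL x^{k+\frac12} - b - \tfrac{\eta}{1+\eta\alpha}\mL\mL^T(y^{k+1} - y^k)$. The key step is to subtract line~\ref{alg:ALV:line:x:2} from line~\ref{alg:ALV:line:x:3}: the terms $x^k$, $\nabla F(x_g^k)$ and $\alpha x_g^k$ cancel, leaving $x^{k+\frac12} - x^{k+1} = \tfrac{\eta}{1+\eta\alpha}\mL^T(y^{k+1} - y^k)$. Applying $\mL$ on the left turns the correction term into $\mL x^{k+\frac12} - \mL x^{k+1}$, so the whole expression collapses to $\mL x^{k+1} - b$, which is the second component of the right-hand side.

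There is no genuine obstacle here: the statement is a bookkeeping identity, and the only point requiring a little care is tracking the factor $(1+\eta\alpha)^{-1}$ through the manipulations and spotting the elimination of $x^{k+\frac12}$ that makes the $\mL\mL^T$ term of $\mQ$ appear of its own accord. Positive definiteness of $\mQ$ — which is what makes \eqref{ALV:eq:xy} a legitimate implicit characterization of $(x^{k+1},y^{k+1})$ and is used in the convergence analysis — is established separately in the preceding lemma under the stepsize condition $\eta\theta\lambda_{\max}(\mW) \le 1$, satisfied by the parameter choice of Proposition~\ref{th:ALV}.
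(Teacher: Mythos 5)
Your proposal is correct and follows essentially the same route as the paper's proof: the first block row comes from rearranging the $x^{k+1}$-update after clearing the factor $(1+\eta\alpha)^{-1}$, and the second block row from combining the $y$-update with the identity $x^{k+1}-x^{k+\frac12}=-\tfrac{\eta}{1+\eta\alpha}\mL^T(y^{k+1}-y^k)$ obtained by subtracting the two $x$-updates, which is exactly how the $\mL\mL^T$ term of $\mQ$ is absorbed in the paper. No gaps.
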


\begin{proof}
	Using the definition of $\mQ$, we have
	\begin{align*}
	\mQ \begin{bmatrix}
	x^{k+1} - x^k\\y^{k+1} - y^k
	\end{bmatrix}
	=
	\begin{bmatrix}
	\frac{1}{\eta}(x^{k+1} - x^k)\\
	\frac{1}{\theta}(y^{k+1} - y^k) - \frac{\eta}{1+\eta\alpha}\mL \mL^T(y^{k+1} - y^k)
	\end{bmatrix}.
	\end{align*}
	\pagebreak
	From line 7 of Algorithm~\ref{alg:ALV} it follows that
	\begin{align*}
		\frac{1}{\eta}(x^{k+1} - x^k) = \alpha(x_g^k - x^{k+1})-(\nabla F(x_g^k) + \mL^T y^{k+1}),
	\end{align*}
	and from line 6 of Algorithm~\ref{alg:ALV}
	\begin{align*}
		y^{k+1} - y^k = \theta (\mL x^{k+\frac{1}{2}} - b).
	\end{align*}
	Hence,
	\begin{align*}
	\mQ \begin{bmatrix}
	x^{k+1} - x^k\\y^{k+1} - y^k
	\end{bmatrix}
	=
	\begin{bmatrix}
	\alpha(x_g^k - x^{k+1})-(\nabla F(x_g^k) + \mL^T y^{k+1})\\
	(\mL x^{k+\frac{1}{2}} - b) - \frac{\eta}{1+\eta\alpha}\mL \mL^T(y^{k+1} - y^k)
	\end{bmatrix}.
	\end{align*}
	From lines 5 and 7 of Algorithm~\ref{alg:ALV},
	\begin{equation}
		x^{k+1} - x^{k+\frac{1}{2}} = \frac{-\eta}{1+\eta\alpha} \mL^T(y^{k+1} - y^k),
	\end{equation}
	therefore,
	\begin{equation*}
	    (\mL x^{k+\frac{1}{2}} - b) - \frac{\eta}{1+\eta\alpha}\mL \mL^T(y^{k+1} - y^k) = \mL\left(x^{k+\frac{1}{2}}- \frac{\eta}{1+\eta\alpha} \mL^T(y^{k+1} - y^k) \right) - b = \mL x^{k+1} - b.
	\end{equation*}
	Finally, 
	\begin{align*}
	\mQ \begin{bmatrix}
	x^{k+1} - x^k\\y^{k+1} - y^k
	\end{bmatrix}
	=
	\begin{bmatrix}
	\alpha(x_g^k - x^{k+1})-(\nabla F(x_g^k) + \mL^T y^{k+1})\\
	\mL x^{k+1} - b
	\end{bmatrix}.
	\end{align*}
\end{proof}

We now start the proof of Proposition~\ref{th:ALV}.

\begin{lemma}\label{ALV:lem:1}
	Suppose that $\alpha$ satisfies $0\leq \alpha \leq \mu$.
	Then the following inequality holds:
	\begin{align}\label{ALV:eq:1}
		-\frac{1}{2\eta}\sqn{x^{k+1} - x^k} \leq -\frac{\eta}{4}\sqn{\mL^T y^{k+1} - \mL^T y^{\star}}
		+
		\eta\alpha^2\sqn{x^{k+1} - x^{\star}}
		+
		2\eta L\bg_{F}(x_g^k, x^{\star}).
	\end{align}
\end{lemma}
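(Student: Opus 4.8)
The plan is to prove Lemma~\ref{ALV:lem:1} by combining two ingredients: the representation of the algorithm from Equation~\eqref{ALV:eq:xy}, which expresses $\frac{1}{\eta}(x^{k+1}-x^k)$ as $\alpha(x_g^k - x^{k+1}) - (\nabla F(x_g^k) + \mL^T y^{k+1})$, together with the optimality condition $0 = \nabla F(x^\star) + \mL^T y^\star$ from~\eqref{eq:pd1}. Subtracting these two identities yields
\begin{equation*}
\tfrac{1}{\eta}(x^{k+1}-x^k) = \alpha(x_g^k - x^{k+1}) - \big(\nabla F(x_g^k) - \nabla F(x^\star)\big) - \mL^T(y^{k+1} - y^\star).
\end{equation*}
First I would take the squared $\sX$-norm of this identity and multiply by $\eta$, so that the left-hand side becomes $\frac{1}{\eta}\sqn{x^{k+1}-x^k}$, matching (up to the factor $\tfrac12$ and sign) the term we want to bound.

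Next I would expand the right-hand side. The cross terms between $-\mL^T(y^{k+1}-y^\star)$ and the remaining two vectors $\alpha(x_g^k - x^{k+1})$ and $\nabla F(x_g^k) - \nabla F(x^\star)$ need to be controlled. The key trick is to isolate the term $-\frac{\eta}{4}\sqn{\mL^T y^{k+1} - \mL^T y^\star}$ on the right: I would keep a fraction (say $\tfrac14$ or $\tfrac12$) of the $\eta\sqn{\mL^T(y^{k+1}-y^\star)}$ term with its natural negative sign after moving it appropriately, and use Young's inequality $2\Dotprod{a,b} \le c\sqn{a} + c^{-1}\sqn{b}$ on the cross terms to absorb the residual $\mL^T(y^{k+1}-y^\star)$ contributions. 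The terms involving $\alpha(x_g^k - x^{k+1})$ should, after Young's inequality, produce something like $\eta\alpha^2\sqn{x^{k+1}-x^\star}$ (plus possibly $\eta\alpha^2\sqn{x_g^k - x^\star}$, which would then need to be folded into the Bregman term using strong convexity, i.e. $\frac{\mu}{2}\sqn{x_g^k-x^\star}\le \bg_F(x_g^k,x^\star)$ and $\alpha\le\mu$). The terms involving $\nabla F(x_g^k) - \nabla F(x^\star)$ should be bounded by the Bregman divergence: here I would use the standard consequence of $L$-smoothness and convexity that $\sqn{\nabla F(x_g^k) - \nabla F(x^\star)} \le 2L\,\bg_F(x_g^k,x^\star)$, which produces the $2\eta L\,\bg_F(x_g^k,x^\star)$ term.

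The main obstacle will be the bookkeeping of constants when splitting the $\mL^T(y^{k+1}-y^\star)$ contributions: one must choose the Young's inequality weights so that, after collecting everything, exactly $-\tfrac14\eta\sqn{\mL^T y^{k+1}-\mL^T y^\star}$ survives on the right and the other coefficients come out as $\eta\alpha^2$ and $2\eta L$ (rather than larger constants). A natural route is to write $-\mL^T(y^{k+1}-y^\star)$ as appearing with coefficient $\eta$ inside the full square, bound $\eta\sqn{A+B+C}$ where $C = -\mL^T(y^{k+1}-y^\star)$ and $A+B$ collects the other two, by something like $\eta(1+\delta)\sqn{C} + \eta(1+\delta^{-1})\sqn{A+B}$ — but this goes the wrong direction since we need a \emph{negative} coefficient on $\sqn{C}$. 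Instead the correct move is to expand the square exactly as $\eta\sqn{A}+\eta\sqn{B}+\eta\sqn{C} + 2\eta\Dotprod{A,B} + 2\eta\Dotprod{A,C} + 2\eta\Dotprod{B,C}$ and then use Young only on the mixed $\Dotprod{\cdot,C}$ terms to bound $2\eta\Dotprod{A,C} + 2\eta\Dotprod{B,C}$ by $-\tfrac54\eta\sqn{C} + (\text{multiples of }\sqn{A}+\sqn{B})$, so that $\eta\sqn{C} - \tfrac54\eta\sqn{C} = -\tfrac14\eta\sqn{C}$; this requires $\Dotprod{A+B, C}$ to be estimated with a sufficiently large negative weight, which is possible precisely because we are proving an inequality (not an identity) and can afford to throw away positive quantities. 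I would then double-check that the only surviving primal squared terms can be reorganized, via $x^{k+1} = x_g^k + \eta\alpha^{-1}$-type relations or simply via the triangle inequality $\sqn{x_g^k - x^\star}\le 2\sqn{x_g^k - x^{k+1}} + 2\sqn{x^{k+1}-x^\star}$ combined with strong convexity, into the stated $\eta\alpha^2\sqn{x^{k+1}-x^\star} + 2\eta L\,\bg_F(x_g^k,x^\star)$ form, absorbing any leftover $\sqn{x^{k+1}-x^k}$ back into the left-hand side (which is why the factor is $-\tfrac{1}{2\eta}$ rather than $-\tfrac1\eta$). I expect this constant-chasing to be the only genuinely delicate part; everything else is a routine application of Young's inequality, $L$-smoothness, and $\mu$-strong convexity with $\alpha\le\mu$.
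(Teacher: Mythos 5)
Your starting point coincides with the paper's: subtract the optimality condition $\nabla F(x^\star)+\mL^T y^\star=0$ from the update identity for $\frac{1}{\eta}(x^{k+1}-x^k)$, take squared norms, and arrange the cross terms so that a $-\frac{\eta}{4}\sqn{\mL^T y^{k+1}-\mL^T y^\star}$ survives. But there is a genuine gap, and it sits exactly in the constant-chasing you flag as delicate. With your grouping --- $A=\alpha(x_g^k-x^{k+1})$, $B=\nabla F(x_g^k)-\nabla F(x^\star)$, $C=-\mL^T(y^{k+1}-y^\star)$ --- the primal residue after Young's inequality is $\eta\alpha^2\sqn{x_g^k-x^{k+1}}$, not $\eta\alpha^2\sqn{x^{k+1}-x^\star}$. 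Converting it by $\sqn{x_g^k-x^{k+1}}\le 2\sqn{x_g^k-x^\star}+2\sqn{x^{k+1}-x^\star}$ and strong convexity, as you propose, yields at best $2\eta\alpha^2\sqn{x^{k+1}-x^\star}+\big(2\eta L+4\eta\alpha\big)\bg_F(x_g^k,x^\star)$, i.e. a bound with constants $2\eta\alpha^2$ and up to $6\eta L$ --- strictly weaker than the lemma. These constants are not cosmetic: the next lemma uses $2\eta L\delta\le 1$ to cancel $-\bg_F(x_g^k,x^\star)$ exactly, and $\eta\alpha^2\delta\le\frac{\alpha^2}{2L}$ to keep $\alpha-\frac{\alpha^2}{2L}\ge\frac{\mu}{2}$; with $6\eta L$ and $2\eta\alpha^2$ both cancellations fail (the latter already at $\mu=L$). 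A secondary flaw: the step "bound $2\Dotprod{A+B,C}$ from above by $-\tfrac54\sqn{C}+(\text{multiples of }\sqn{A}+\sqn{B})$" is not a valid inequality (test $A+B=\epsilon C$ with $\epsilon$ small); the legitimate move is to negate the whole square first and then apply $-2\Dotprod{u,v}\le\tfrac12\sqn{v}+2\sqn{u}$ in the standard direction.

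The missing idea is the paper's regrouping. Write
$\frac{1}{\eta}(x^{k+1}-x^k)=-\mL^T(y^{k+1}-y^\star)-\big(\nabla\bar F(x_g^k)-\nabla\bar F(x^\star)\big)-\alpha(x^{k+1}-x^\star)$ with $\bar F\eqdef F-\frac{\alpha}{2}\sqn{\cdot}$: the piece $-\alpha(x_g^k-x^\star)$ is absorbed into the gradient difference, so the third vector is $\alpha(x^{k+1}-x^\star)$ rather than $\alpha(x_g^k-x^{k+1})$. The hypothesis $\alpha\le\mu$ is used precisely here: $\bar F$ is convex and $(L-\alpha)$-smooth, hence $\sqn{\nabla\bar F(x)-\nabla\bar F(x')}\le 2(L-\alpha)\bg_{\bar F}(x,x')\le 2L\,\bg_F(x,x')$. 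The elementary bound $\sqn{a+b+c}\ge\frac12\sqn{a}-2\sqn{b}-2\sqn{c}$ applied to this decomposition, followed by multiplication by $-\frac{1}{2\eta}$, then gives exactly $-\frac{\eta}{4}\sqn{\mL^T y^{k+1}-\mL^T y^\star}+\eta\alpha^2\sqn{x^{k+1}-x^\star}+2\eta L\,\bg_F(x_g^k,x^\star)$ with no further triangle-inequality losses.
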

\begin{proof}
	From line 7 of Algorithm~\ref{alg:ALV} and the optimality condition 
	$\nabla F(x^{\star}) + \mL^T y^{\star} = 0$, it follows that
	\begin{align*}
		\sqn{x^{k+1} - x^k}
		&=
		\sqn{\eta(\mL^T y^{k+1} - \mL^T y^{\star}) + \eta(\nabla F(x_g^k) - \nabla F(x^{\star}) - \alpha (x_g^k - x^{\star})) + \eta\alpha(x^{k+1} - x^{\star})}
		\\&\geq
		\frac{\eta^2}{2}\sqn{\mL^T y^{k+1} - \mL^T y^{\star}}
		-
		2\eta^2\alpha^2\sqn{x^{k+1} - x^{\star}}
		\\&-
		2\eta^2\sqn{\nabla F(x_g^k) - \nabla F(x^{\star}) - \alpha (x_g^k - x^{\star})},
	\end{align*}
	where we used $\|a+b+c\|^2 \geq 0.5\|a\|^2 - 2\|b\|^2 - 2\|c\|^2$. 
	Let $\bar{F}(x) \eqdef F(x)  - \frac{\alpha}{2}\sqn{x}$. The function $\bar{F}$ is a convex and $(L-\alpha)$-smooth function, therefore $\sqn{\nabla \bar{F}(x) - \nabla \bar{F}(x')} \leq 2(L-\alpha)\bg_{\bar{F}}(x,x')$. Therefore, we can lower bound the last term and get
	\begin{align*}
	\sqn{x^{k+1} - x^k}
	&\geq
	\frac{\eta^2}{2}\sqn{\mL^T y^{k+1} - \mL^T y^{\star}}
	-
	2\eta^2\alpha^2\sqn{x^{k+1} - x^{\star}}
	-
	4\eta^2(L - \alpha)\bg_{\bar{F}}(x_g^k, x^{\star})
	\\&\geq
	\frac{\eta^2}{2}\sqn{\mL^T y^{k+1} - \mL^T y^{\star}}
	-
	2\eta^2\alpha^2\sqn{x^{k+1} - x^{\star}}
	-
	4\eta^2L\bg_{F}(x_g^k, x^{\star}).
	\end{align*}
	Rearranging and dividing by $2\eta$ concludes the proof.
\end{proof}

Our last lemma states the linear convergence of a Lyapunov function to zero.

\begin{lemma}\label{ALV:lem:2}
Consider $\lambda_1 \geq \lambda_{\max}(\mW)$ and $\lambda_2 \leq \lambda_{\min}^+(\mW)$.

	Let parameter $\eta$ be defined by
	\begin{equation}\label{ALV:eq:eta}
		\eta = \frac{1} {4\tau L}.
	\end{equation}
	Let us set the parameter $\theta$ as
	\begin{equation}\label{ALV:eq:theta}
		\theta = \frac{1}{\eta\lambda_1}.
	\end{equation}
	Let us set the parameter $\alpha$ as
	\begin{equation}\label{ALV:eq:alpha}
		\alpha = \mu.
	\end{equation}
	Let us set the parameter $\tau$ as
	\begin{equation}\label{ALV:eq:tau}
		\tau = \min \left\{1,  \frac{1}{2}\sqrt{\frac{\mu}{L}\frac{\lambda_1}{\lambda_2}}\right\}.
	\end{equation}
	Let $\Psi^k$ be the following Lyapunov function:
	\begin{equation}\label{ALV:eq:Psi}
		\Psi^k = \sqN{\begin{bmatrix}
			x^{k} - x^{\star}\\y^{k} - y^{\star}
			\end{bmatrix}}_\mQ
		+
		\frac{2(1-\tau)}{\tau}\bg_F(x_f^k, x^{\star}),
	\end{equation}
	Then the following inequality holds:
	\begin{equation}
		\Psi^{k+1}
		\leq
		\left(1 + \frac{1}{4}\min\left\{\sqrt{\frac{\mu}{L}\frac{\lambda_2}{\lambda_1}},\frac{\lambda_2}{\lambda_1}\right\}\right)^{-1}
		\Psi^k.
	\end{equation}
\end{lemma}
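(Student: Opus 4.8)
The plan is to prove the equivalent statement $(1+\delta)\,\Psi^{k+1}\le\Psi^k$ with $\delta\eqdef\tfrac14\min\big\{\sqrt{\tfrac{\mu}{L}\tfrac{\lambda_2}{\lambda_1}},\,\tfrac{\lambda_2}{\lambda_1}\big\}$, i.e. a one‑step descent $\Psi^{k+1}\le\Psi^k-\delta\,\Psi^{k+1}$ in which the negative slack covers $\delta$ times each of the three terms of $\Psi^{k+1}$. Write $z^k\eqdef(x^k,y^k)$ and $z^\star\eqdef(x^\star,y^\star)$. Since $\eta\theta\lambda_{\max}(\mW)=\lambda_{\max}(\mW)/\lambda_1\le1$, the matrix $\mQ$ is positive definite and the preliminary bound $\tfrac1\eta\sqn{x}+\tfrac{\eta\alpha}{\theta(1+\eta\alpha)}\sqn{y}\le\sqN{(x,y)}_\mQ\le\tfrac1\eta\sqn{x}+\tfrac1\theta\sqn{y}$ applies, as do the representation~\eqref{ALV:eq:xy} and Lemma~\ref{ALV:lem:1}. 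First expand $\sqN{z^{k+1}-z^\star}_\mQ=\sqN{z^k-z^\star}_\mQ-\sqN{z^{k+1}-z^k}_\mQ+2\langle\mQ(z^{k+1}-z^k),z^{k+1}-z^\star\rangle$ and substitute~\eqref{ALV:eq:xy} into the cross term: because $b=\mL x^\star$, the dual block $\langle\mL x^{k+1}-b,y^{k+1}-y^\star\rangle=\langle\mL(x^{k+1}-x^\star),y^{k+1}-y^\star\rangle$ cancels the $\mL^Ty^{k+1}$ part of the primal block, and the survivor $-\langle\mL^Ty^\star,x^{k+1}-x^\star\rangle=\langle\nabla F(x^\star),x^{k+1}-x^\star\rangle$ by~\eqref{eq:pd}; the cross term thus equals $2\alpha\langle x_g^k-x^{k+1},x^{k+1}-x^\star\rangle-2\langle\nabla F(x_g^k)-\nabla F(x^\star),x^{k+1}-x^\star\rangle$. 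Applying $2\langle a-b,b-c\rangle=\sqn{a-c}-\sqn{a-b}-\sqn{b-c}$ to the first half produces $+\alpha\sqn{x_g^k-x^\star}$ and the useful $-\alpha\sqn{x^{k+1}-x_g^k}=-\mu\sqn{x^{k+1}-x_g^k}$; the $+\mu\sqn{x_g^k-x^\star}$ is then absorbed by $-2\bg_F(x^\star,x_g^k)\le-\mu\sqn{x_g^k-x^\star}$, which appears once the component $-2\langle\nabla F(x_g^k)-\nabla F(x^\star),x_g^k-x^\star\rangle=-2\bg_F(x_g^k,x^\star)-2\bg_F(x^\star,x_g^k)$ of the gradient term is peeled off. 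This is the only place $\alpha=\mu$ is used.

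Next I would incorporate the Bregman term $\tfrac{2(1-\tau)}{\tau}\bg_F(x_f^{k+1},x^\star)$. Bound $F(x_f^{k+1})$ from $x_g^k$ by $L$‑smoothness along $x_f^{k+1}-x_g^k=\tfrac{2\tau}{2-\tau}(x^{k+1}-x^k)$; after inserting the definition of $\bg_F$ this gives $\tfrac{2(1-\tau)}{\tau}\bg_F(x_f^{k+1},x^\star)\le\tfrac{2(1-\tau)}{\tau}\bg_F(x_g^k,x^\star)+\tfrac{4(1-\tau)}{2-\tau}\langle\nabla F(x_g^k)-\nabla F(x^\star),x^{k+1}-x^k\rangle+c_\tau\sqn{x^{k+1}-x^k}$ with $c_\tau\le\tfrac1{4\eta}$ (using $\eta=\tfrac1{4\tau L}$ and $\tau\le1$). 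Now combine this gradient inner product with the $-2\langle\nabla F(x_g^k)-\nabla F(x^\star),x^{k+1}-x^\star\rangle$ above and decompose both vectors through $x^{k+1}-x_g^k$ and $x_g^k-x^\star$, and — using $x_g^k=\tau x^k+(1-\tau)x_f^k$ — through $x_f^k-x^\star$ and $x^k-x^\star$. The momentum constant $\tfrac{2\tau}{2-\tau}$ together with the weight $\tfrac{2(1-\tau)}{\tau}$ is chosen precisely so that convexity of $F$ and of $\bg_F(\cdot,x^\star)$ send the $x_f^k$‑contributions into $\tfrac{2(1-\tau)}{\tau}\bg_F(x_f^k,x^\star)$ with a positive margin (one checks $\tfrac{2\tau(1-\tau)}{2-\tau}<1$), the $x^k$‑contributions into a multiple of $\sqn{x^k-x^\star}$ absorbed by $\sqN{z^k-z^\star}_\mQ$, and the residual inner product against $x^{k+1}-x_g^k$ into a strictly negative multiple, which Young's inequality — paid with the retained $-\mu\sqn{x^{k+1}-x_g^k}$ and $\sqn{\nabla F(x_g^k)-\nabla F(x^\star)}\le2L\,\bg_F(x_g^k,x^\star)$ — converts into a further small multiple of $\bg_F(x_g^k,x^\star)$.

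Now split the leftover $-\sqN{z^{k+1}-z^k}_\mQ\le-\tfrac1\eta\sqn{x^{k+1}-x^k}-\tfrac{\eta\alpha}{\theta(1+\eta\alpha)}\sqn{y^{k+1}-y^k}$, spend $c_\tau\sqn{x^{k+1}-x^k}$ of the primal part on the Bregman quadratic, and feed $-\tfrac1{2\eta}\sqn{x^{k+1}-x^k}$ of what remains into Lemma~\ref{ALV:lem:1} to obtain $-\tfrac\eta4\sqn{\mL^Ty^{k+1}-\mL^Ty^\star}+\eta\mu^2\sqn{x^{k+1}-x^\star}+2\eta L\,\bg_F(x_g^k,x^\star)$. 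Because $y^0=0\in\range(\mL)$ and $y^{k+1}=y^k+\theta\mL(x^{k+\frac12}-x^\star)$, all $y^k$ and also $y^\star$ lie in $\range(\mL)$, hence $\sqn{\mL^Ty^{k+1}-\mL^Ty^\star}\ge\lambda_{\min}^+(\mW)\sqn{y^{k+1}-y^\star}\ge\lambda_2\sqn{y^{k+1}-y^\star}$. Collecting everything, $\Psi^{k+1}$ is bounded above by $\Psi^k$ minus: nonnegative multiples of $\sqn{x^{k+1}-x_g^k}$ and $\sqn{x^{k+1}-x^k}$ (discarded); a term $(\tfrac\mu2-\eta\mu^2)\sqn{x^{k+1}-x^\star}$, nonnegative because $\tau\ge\tfrac1{2\kappa}$ forces $\eta\mu^2\le\tfrac\mu2$; a term $\tfrac{\eta\lambda_2}{4}\sqn{y^{k+1}-y^\star}$; and a negative multiple of $\bg_F(x_g^k,x^\star)$ which, through the smoothness estimate $\bg_F(x_f^{k+1},x^\star)\le\big(1+\tfrac{2\tau L}{2-\tau}\big)\bg_F(x_g^k,x^\star)+(\mathrm{const})\sqn{x^{k+1}-x^k}$ used in reverse, covers $\delta\tfrac{2(1-\tau)}{\tau}\bg_F(x_f^{k+1},x^\star)$. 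Substituting $\eta=\tfrac1{4\tau L}$, $\theta=\tfrac1{\eta\lambda_1}$, $\alpha=\mu$ and the value of $\tau$, one finally checks that each surplus dominates $\delta$ times the matching term of $\Psi^{k+1}$ — $\delta\tfrac1\eta\sqn{x^{k+1}-x^\star}$, $\delta\tfrac1\theta\sqn{y^{k+1}-y^\star}$ (via $\sqN{z^{k+1}-z^\star}_\mQ\le\tfrac1\eta\sqn{x^{k+1}-x^\star}+\tfrac1\theta\sqn{y^{k+1}-y^\star}$), and $\delta\tfrac{2(1-\tau)}{\tau}\bg_F(x_f^{k+1},x^\star)$ — distinguishing the regime $\tau=1$ (then $\delta=\tfrac14\tfrac{\lambda_2}{\lambda_1}$ and the dual comparison $\tfrac{\eta\lambda_2}{4}\ge\delta\eta\lambda_1$ is binding) from the regime $\tau=\tfrac12\sqrt{\tfrac{\mu\lambda_1}{L\lambda_2}}<1$ (then $\delta=\tfrac14\sqrt{\tfrac{\mu\lambda_2}{L\lambda_1}}$ and the primal and Bregman comparisons are binding).

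I expect the main obstacle to be the second and third steps above: pinning down the decomposition of the two gradient inner products so that the Bregman terms telescope with exactly the weight $\tfrac{2(1-\tau)}{\tau}$ while leaving a clean strictly negative remainder — this is what forces the precise constants $\tfrac{2\tau}{2-\tau}$, $\tfrac{2(1-\tau)}{\tau}$ and $\alpha=\mu$ — and then verifying all of the constants in the final comparison, which has to be carried out separately in the two regimes of $\tau$. Everything else (the three‑point identity, the cancellations, the use of Lemma~\ref{ALV:lem:1} and of the range condition on $\mL$) is routine once these identities are fixed.
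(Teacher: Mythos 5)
Your overall route is the paper's: expand the $\mQ$-norm, substitute the fixed-point representation \eqref{ALV:eq:xy}, cancel the linear terms by skew-symmetry, convert the gradient inner products into Bregman divergences via $x^k-x^\star=(x_g^k-x^\star)+\frac{1-\tau}{\tau}(x_g^k-x_f^k)$ and $x^{k+1}-x^k=\frac{2-\tau}{2\tau}(x_f^{k+1}-x_g^k)$, invoke Lemma~\ref{ALV:lem:1} on part of $-\frac{1}{\eta}\sqn{x^{k+1}-x^k}$, use $y^k,y^\star\in\range(\mL)$ to pass from $\sqn{\mL^T(y^{k+1}-y^\star)}$ to $\lambda_2\sqn{y^{k+1}-y^\star}$, and compare with $\Psi^{k+1}$ through \eqref{ALV:eq:Pbound}. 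The minor algebraic variations (three-point identity instead of Young's inequality for the $\alpha$-term, a slightly different split of the $\sqn{x^{k+1}-x^k}$ budget) are harmless.

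There is, however, one concrete gap in your accounting of $\bg_F(x_g^k,x^\star)$. After the telescoping ($+\frac{2-\tau}{\tau}-\frac{2(1-\tau)}{\tau}=+1$) and strong convexity ($-2$), the available budget is exactly $-\bg_F(x_g^k,x^\star)$ --- and your extra Young step against $x^{k+1}-x_g^k$ only adds a further positive multiple to it. Feeding the full $-\frac{1}{2\eta}\sqn{x^{k+1}-x^k}$ into Lemma~\ref{ALV:lem:1} then returns $+2\eta L\,\bg_F(x_g^k,x^\star)=\frac{1}{2\tau}\bg_F(x_g^k,x^\star)$, which exceeds this budget whenever $\tau<\frac12$, i.e.\ whenever $\kappa>\lambda_1/\lambda_2$ --- precisely the regime relevant for Theorem~\ref{th:main}, where $\chi(\mP(\mW))\le 19/11$. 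The uncancelled remainder is a positive multiple of $\bg_F(x_g^k,x^\star)$ that nothing else controls. The paper's fix is to apply Lemma~\ref{ALV:lem:1} only to a fraction $\delta=\min\{1,\frac{1}{2\eta L}\}$ of $-\frac{1}{2\eta}\sqn{x^{k+1}-x^k}$, so the Bregman surplus is $2\eta L\delta\le1$; the price is that the dual gain shrinks to $\frac{\eta\delta\lambda_2}{4}\sqn{y^{k+1}-y^\star}$, which is exactly where the third candidate $\frac{\tau\lambda_2}{2\lambda_1}$ in the final minimum comes from (and is then checked to be dominated by the stated rate after substituting $\tau$). Relatedly, your mechanism for producing the $(1+\frac{\tau}{2})$ contraction on $\bg_F(x_f^{k+1},x^\star)$ --- recycling a leftover $-\bg_F(x_g^k,x^\star)$ through the smoothness estimate ``in reverse'' --- relies on a negative surplus that, as just noted, is not there; the paper instead obtains this factor at no cost from the elementary inequality $\frac{2-\tau}{\tau}\ge\bigl(1+\frac{\tau}{2}\bigr)\frac{2(1-\tau)}{\tau}$.
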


\begin{proof}
	\begin{align*}
		\sqN{\begin{bmatrix}
				x^{k+1} - x^{\star}\\y^{k+1} - y^{\star}
			\end{bmatrix}}_\mQ
		&=
		\sqN{\begin{bmatrix}
			x^{k} - x^{\star}\\y^{k} - y^{\star}
			\end{bmatrix}}_\mQ
		-
		\sqN{\begin{bmatrix}
			x^{k+1} - x^k\\y^{k+1} - y^k
			\end{bmatrix}}_\mQ
		+
		2\Dotprod{\begin{bmatrix}
			x^{k+1} - x^k\\y^{k+1} - y^k
			\end{bmatrix}, \begin{bmatrix}
			x^{k+1} - x^{\star}\\y^{k+1} - y^{\star}
			\end{bmatrix}}_{\mQ}.
	\end{align*}
	Note that stepsize $\eta$ defined by \eqref{ALV:eq:eta} and stepsize $\theta$ defined by \eqref{ALV:eq:theta} satisfy \eqref{ALV:eq:etatheta}, hence inequality \eqref{ALV:eq:Pbound} holds.
	Using \eqref{ALV:eq:Pbound} and \eqref{ALV:eq:xy} we get
	\begin{align*}
	\sqN{\begin{bmatrix}
		x^{k+1} - x^{\star}\\y^{k+1} - y^{\star}
		\end{bmatrix}}_\mQ
	&\leq
	\sqN{\begin{bmatrix}
		x^{k} - x^{\star}\\y^{k} - y^{\star}
		\end{bmatrix}}_\mQ
	-
	\frac{1}{\eta}\sqn{x^{k+1} - x^k}
	\\&\quad+
	2\Dotprod{\begin{bmatrix}
			\alpha (x_g^k - x^{k+1}) - (\nabla F(x_g^k) + \mL^T y^{k+1})\\
			\mL x^{k+1} - b
		\end{bmatrix}, \begin{bmatrix}
		x^{k+1} - x^{\star}\\y^{k+1} - y^{\star}
		\end{bmatrix}}\\
	&=
	\sqN{\begin{bmatrix}
		x^{k} - x^{\star}\\y^{k} - y^{\star}
		\end{bmatrix}}_\mQ
	-
	\frac{1}{\eta}\sqn{x^{k+1} - x^k} +
	2\alpha\<x_g^k - x^{k+1}, x^{k+1} - x^{\star}>
	\\
	\\&
	\quad-2\Dotprod{\begin{bmatrix}
			\nabla F(x_g^k) + \mL^T y^{k+1}\\
			-\mL x^{k+1} + b
		\end{bmatrix} - \begin{bmatrix}\nabla F(x^{\star}) + \mL^T y^{\star}\\
			-\mL x^{\star} + b
		\end{bmatrix}, \begin{bmatrix}
		x^{k+1} - x^{\star}\\y^{k+1} - y^{\star}
		\end{bmatrix}},
	\end{align*}
	using the primal--dual optimality conditions~\eqref{eq:pd}. Rewrite 
	\begin{equation*}
	    \begin{bmatrix}
			\nabla F(x_g^k) + \mL^T y^{k+1}\\
			-\mL x^{k+1} + b
		\end{bmatrix} - \begin{bmatrix}\nabla F(x^{\star}) + \mL^T y^{\star}\\
			-\mL x^{\star} + b
		\end{bmatrix} = \begin{bmatrix}
			\nabla F(x_g^k) - \nabla F(x^{\star})\\
			b - b
		\end{bmatrix} + \begin{bmatrix} 0 & \mL^T \\
			-\mL & 0 
		\end{bmatrix} \begin{bmatrix}
			x^{k+1} - x^{\star}\\
			y^{k+1} - y^{\star}
		\end{bmatrix}.
	\end{equation*}
	Using $\Dotprod{A z,z} = 0$ for any skew-symmetric matrix $A$, we obtain
	\begin{align*}
	    -2\Dotprod{\begin{bmatrix}
			\nabla F(x_g^k) + \mL^T y^{k+1}\\
			-\mL x^{k+1} + b
		\end{bmatrix} - \begin{bmatrix}\nabla F(x^{\star}) + \mL^T y^{\star}\\
			-\mL x^{\star} + b
		\end{bmatrix}, \begin{bmatrix}
		x^{k+1} - x^{\star}\\y^{k+1} - y^{\star}
		\end{bmatrix}}\\
		 = -
	2\<\nabla F(x_g^k) - \nabla F(x^{\star}), x^{k+1} - x^{\star}>.
	\end{align*}
	Hence,
	\begin{align*}
	\sqN{\begin{bmatrix}
		x^{k+1} - x^{\star}\\y^{k+1} - y^{\star}
		\end{bmatrix}}_\mQ
	&\leq
	\sqN{\begin{bmatrix}
		x^{k} - x^{\star}\\y^{k} - y^{\star}
		\end{bmatrix}}_\mQ
	-
	\frac{1}{\eta}\sqn{x^{k+1} - x^k}
	+
	2\alpha\<x_g^k - x^{k+1}, x^{k+1} - x^{\star}>
	\\&\quad-
	2\<\nabla F(x_g^k) - \nabla F(x^{\star}), x^{k+1} - x^{\star}>
	\\&=
	\sqN{\begin{bmatrix}
		x^{k} - x^{\star}\\y^{k} - y^{\star}
		\end{bmatrix}}_\mQ
	-
	\frac{1}{\eta}\sqn{x^{k+1} - x^k}
	-
	2\alpha\sqn{x^{k+1} - x^{\star}}
	\\&\quad-
	2\alpha\<x_g^k -  x^{\star}, x^{k+1} - x^{\star}>
	-
	2\<\nabla F(x_g^k) - \nabla F(x^{\star}), x^{k+1} - x^{\star}>.
	\end{align*}
	Using Young's  inequality $2\<a,b> \leq \sqn{a} + \sqn{b}$ we get
	\begin{align*}
	\sqN{\begin{bmatrix}
		x^{k+1} - x^{\star}\\y^{k+1} - y^{\star}
		\end{bmatrix}}_\mQ
	&\leq
	\sqN{\begin{bmatrix}
		x^{k} - x^{\star}\\y^{k} - y^{\star}
		\end{bmatrix}}_\mQ
	-
	\frac{1}{\eta}\sqn{x^{k+1} - x^k}
	-
	2\alpha\sqn{x^{k+1} - x^{\star}}
	\\&\quad+
	\alpha\sqn{x_g^k - x^{\star}}
	+
	\alpha\sqn{x^{k+1} - x^{\star}}
	-
	2\<\nabla F(x_g^k) - \nabla F(x^{\star}), x^{k+1} - x^{\star}>
	\\&=
	\sqN{\begin{bmatrix}
		x^{k} - x^{\star}\\y^{k} - y^{\star}
		\end{bmatrix}}_\mQ
	-
	\frac{1}{\eta}\sqn{x^{k+1} - x^k}
	-
	\alpha\sqn{x^{k+1} - x^{\star}}
	+
	\alpha\sqn{x_g^k - x^{\star}}
	\\&\quad-
	2\<\nabla F(x_g^k) - \nabla F(x^{\star}), x^{k+1} - x^{\star}>.
	\end{align*}
	Using line 4 of Algorithm~\ref{alg:ALV}, we have $x^k - x^{\star} = (x_g^k - x^{\star}) + \frac{1-\tau}{\tau}(x_g^k - x_f^k)$ and using line 8, $x^{k+1} - x^k = \frac{2-\tau}{2\tau}(x_f^{k+1} - x_g^{k})$. Therefore, decomposing $x^{k+1} - x^{\star} = (x^{k+1} - x^k) + (x^k - x^{\star})$,
	\begin{align*}
	\sqN{\begin{bmatrix}
		x^{k+1} - x^{\star}\\y^{k+1} - y^{\star}
		\end{bmatrix}}_\mQ
	&\leq
	\sqN{\begin{bmatrix}
		x^{k} - x^{\star}\\y^{k} - y^{\star}
		\end{bmatrix}}_\mQ
	-
	\alpha\sqn{x^{k+1} - x^{\star}}
	+
	\alpha\sqn{x_g^k - x^{\star}}
	-
	\frac{1}{2\eta}\sqn{x^{k+1} - x^k}
	\\&-
	\frac{2-\tau}{\tau}
	\left(
			\<\nabla F(x_g^k) - \nabla F(x^{\star}), x_f^{k+1} - x_g^k>
			+
			\frac{1}{2\eta}\frac{(2-\tau)}{4\tau}\sqn{x_f^{k+1} - x_g^k}
	\right)
	\\&-
	2\<\nabla F(x_g^k) - \nabla F(x^{\star}), x_g^k - x^{\star}>
	+
	\frac{2(1-\tau)}{\tau}\<\nabla F(x_g^k) - \nabla F(x^{\star}), x_f^k - x_g^k>.
	\end{align*}
	Since  $\eta$ defined by \eqref{ALV:eq:eta} satisfies $\eta \leq \frac{2-\tau}{4\tau L}$, we get
	\begin{align*}
	\sqN{\begin{bmatrix}
		x^{k+1} - x^{\star}\\y^{k+1} - y^{\star}
		\end{bmatrix}}_\mQ
	&\leq
	\sqN{\begin{bmatrix}
		x^{k} - x^{\star}\\y^{k} - y^{\star}
		\end{bmatrix}}_\mQ
	-
	\alpha\sqn{x^{k+1} - x^{\star}}
	+
	\alpha\sqn{x_g^k - x^{\star}}
	-
	\frac{1}{2\eta}\sqn{x^{k+1} - x^k}
	\\&\quad-
	\frac{2-\tau}{\tau}
	\left(
	\<\nabla F(x_g^k) - \nabla F(x^{\star}), x_f^{k+1} - x_g^k>
	+
	\frac{L}{2}\sqn{x_f^{k+1} - x_g^k}
	\right)
	\\&\quad-
	2\<\nabla F(x_g^k) - \nabla F(x^{\star}), x_g^k - x^{\star}>\\
	&\quad+
	\frac{2(1-\tau)}{\tau}\<\nabla F(x_g^k) - \nabla F(x^{\star}), x_f^k - x_g^k>.
	\end{align*}
	Using $\mu$-strong convexity and $L$-smoothness of $F$ we get
	\begin{align*}
	\sqN{\begin{bmatrix}
		x^{k+1} - x^{\star}\\y^{k+1} - y^{\star}
		\end{bmatrix}}_\mQ
	&\leq
	\sqN{\begin{bmatrix}
		x^{k} - x^{\star}\\y^{k} - y^{\star}
		\end{bmatrix}}_\mQ
	-
	\alpha\sqn{x^{k+1} - x^{\star}}
	+
	\alpha\sqn{x_g^k - x^{\star}}
	-
	\frac{1}{2\eta}\sqn{x^{k+1} - x^k}
	\\&\quad-
	\frac{2-\tau}{\tau}
	\left(
		\bg_F (x_f^{k+1},x^{\star}) - \bg_F (x_g^k,x^{\star})
	\right)
	\\&\quad+
	\frac{2(1-\tau)}{\tau}\left( \bg_F(x_f^k, x^{\star}) - \bg_F(x_g^k,x^{\star}) \right)
	\\&\quad-
	2\left(\bg_F(x_g^k,x^{\star}) + \frac{\mu}{2}\sqn{x_g^k - x^{\star}}\right)
	\\&=
	\sqN{\begin{bmatrix}
		x^{k} - x^{\star}\\y^{k} - y^{\star}
		\end{bmatrix}}_\mQ
	-
	\alpha\sqn{x^{k+1} - x^{\star}}
	+
	\frac{2(1-\tau)}{\tau}\bg_F(x_f^k, x^{\star})
	\\&\quad-
	\frac{2-\tau}{\tau}\bg_F (x_f^{k+1},x^{\star}) 
	\\&\quad+
	(\alpha - \mu)\sqn{x_g^k - x^{\star}}
	-
	\frac{1}{2\eta}\sqn{x^{k+1} - x^k}
	-
	\bg_F(x_g^k,x^{\star}).
	\end{align*}
	Now, we define $\delta = \min \left\{1, \frac{1}{2\eta L}\right\}$. Since $\alpha$ defined by \eqref{ALV:eq:alpha} satisfies conditions of Lemma~\ref{ALV:lem:1}, we can use \eqref{ALV:eq:1} and get
	\begin{align*}
	\sqN{\begin{bmatrix}
		x^{k+1} - x^{\star}\\y^{k+1} - y^{\star}
		\end{bmatrix}}_\mQ
	&\leq
	\sqN{\begin{bmatrix}
		x^{k} - x^{\star}\\y^{k} - y^{\star}
		\end{bmatrix}}_\mQ
	-
	\alpha\sqn{x^{k+1} - x^{\star}}
	+
	\frac{2(1-\tau)}{\tau}\bg_F(x_f^k, x^{\star})\\
	&\quad-
	\frac{2-\tau}{\tau}\bg_F (x_f^{k+1},x^{\star}) 
	\\&\quad+
	(\alpha - \mu)\sqn{x_g^k - x^{\star}}
	-
	\frac{\delta}{2\eta}\sqn{x^{k+1} - x^k}
	-
	\bg_F(x_g^k,x^{\star})
	\\&\leq
	\sqN{\begin{bmatrix}
		x^{k} - x^{\star}\\y^{k} - y^{\star}
		\end{bmatrix}}_\mQ
	-
	\alpha\sqn{x^{k+1} - x^{\star}}
	+
	\frac{2(1-\tau)}{\tau}\bg_F(x_f^k, x^{\star})
	\\&\quad-
	\frac{2-\tau}{\tau}\bg_F (x_f^{k+1},x^{\star}) 
	-
	\frac{\eta\delta}{4}\sqn{\mL^T y^{k+1} - \mL^T y^{\star}}
	+
	\eta\alpha^2\delta\sqn{x^{k+1} - x^{\star}}\\
	&\quad+
	2\eta  L \delta\bg_{f }(x_g^k, x^{\star})
	+
	(\alpha - \mu)\sqn{x_g^k - x^{\star}}
	-
	\bg_F(x_g^k,x^{\star})
	\\&\leq
	\sqN{\begin{bmatrix}
		x^{k} - x^{\star}\\y^{k} - y^{\star}
		\end{bmatrix}}_\mQ
	-
	\alpha\sqn{x^{k+1} - x^{\star}}
	+
	\frac{2(1-\tau)}{\tau}\bg_F(x_f^k, x^{\star})
	\\&\quad-
	\frac{2-\tau}{\tau}\bg_F (x_f^{k+1},x^{\star}) 
	-
	\frac{\eta\delta}{4}\sqn{\mL^T y^{k+1} - \mL^T y^{\star}}
	+
	\frac{\alpha^2}{2L}\sqn{x^{k+1} - x^{\star}}
	\\&\quad+
	(\alpha - \mu)\sqn{x_g^k - x^{\star}}
	\\&=
	\sqN{\begin{bmatrix}
		x^{k} - x^{\star}\\y^{k} - y^{\star}
		\end{bmatrix}}_\mQ
	-
	\left(
		\alpha - \frac{\alpha^2}{2L}
	\right)\sqn{x^{k+1} - x^{\star}}
	-
	\frac{\eta\delta}{4}\sqn{\mL^T y^{k+1} - \mL^T y^{\star}}
	\\&\quad+
	\frac{2(1-\tau)}{\tau}\bg_F(x_f^k, x^{\star})
	-
	\frac{2-\tau}{\tau}\bg_F (x_f^{k+1},x^{\star}) 
	+
	(\alpha - \mu)\sqn{x_g^k - x^{\star}}.
	\end{align*}
	Using the parameter $\alpha = \mu$ defined in \eqref{ALV:eq:alpha} and using $\mu \leq L$, we get
	\begin{align*}
	\sqN{\begin{bmatrix}
		x^{k+1} - x^{\star}\\y^{k+1} - y^{\star}
		\end{bmatrix}}_\mQ
	&\leq
	\sqN{\begin{bmatrix}
		x^{k} - x^{\star}\\y^{k} - y^{\star}
		\end{bmatrix}}_\mQ
	-
	\frac{\mu}{2}\sqn{x^{k+1} - x^{\star}}
	-
	\frac{\eta\delta}{4}\sqn{\mL^T y^{k+1} - \mL^T y^{\star}}
	\\&\quad+
	\frac{2(1-\tau)}{\tau}\bg_F(x_f^k, x^{\star})
	-
	\frac{2-\tau}{\tau}\bg_F (x_f^{k+1},x^{\star}) .
	\end{align*}
	For every $y \in \range(\mL)$, $\lambda_2\|y\|^2 \leq \lambda_{\min}^{+}(\mW)\|y\|^2 \leq \|\mL^T y\|^2$. Using line 6 of Algorithm~\ref{alg:ALV}, one can check by induction that $y^k \in \range(\mL)$ for every $k \geq 0$. Moreover, using~\eqref{eq:pd}, $y^{\star}\in \range(\mL)$. Therefore, 
	\begin{align*}
	\sqN{\begin{bmatrix}
		x^{k+1} - x^{\star}\\y^{k+1} - y^{\star}
		\end{bmatrix}}_\mQ
	&\leq
	\sqN{\begin{bmatrix}
		x^{k} - x^{\star}\\y^{k} - y^{\star}
		\end{bmatrix}}_\mQ
	-
	\frac{\mu}{2}\sqn{x^{k+1} - x^{\star}}
	-
	\frac{\eta\delta\lambda_2}{4}\sqn{y^{k+1} - y^{\star}}
	\\&\quad+
	\frac{2(1-\tau)}{\tau}\bg_F(x_f^k, x^{\star})
	-
	\frac{2-\tau}{\tau}\bg_F (x_f^{k+1},x^{\star}).
	\end{align*}
	Using \eqref{ALV:eq:Pbound} we get
	\begin{align*}
	\sqN{\begin{bmatrix}
		x^{k+1} - x^{\star}\\y^{k+1} - y^{\star}
		\end{bmatrix}}_\mQ
	&\leq
	\sqN{\begin{bmatrix}
		x^{k} - x^{\star}\\y^{k} - y^{\star}
		\end{bmatrix}}_\mQ
	-
	\min\left\{\frac{\eta\mu}{2}, \frac{\eta\theta\delta\lambda_2}{4}\right\}
	\sqN{\begin{bmatrix}
		x^{k+1} - x^{\star}\\y^{k+1} - y^{\star}
		\end{bmatrix}}_\mQ
	\\&\quad+
	\frac{2(1-\tau)}{\tau}\bg_F(x_f^k, x^{\star})
	-
	\frac{2-\tau}{\tau}\bg_F (x_f^{k+1},x^{\star}) .
	\end{align*}
	Using the parameter $\theta$ defined in \eqref{ALV:eq:theta} and the definition of $\delta$, we get
	\begin{align*}
	\sqN{\begin{bmatrix}
		x^{k+1} - x^{\star}\\y^{k+1} - y^{\star}
		\end{bmatrix}}_\mQ
	&\leq
	\sqN{\begin{bmatrix}
	x^{k} - x^{\star}\\y^{k} - y^{\star}
	\end{bmatrix}}_\mQ
	-
	 \min\left\{\frac{\eta\mu}{2},\frac{\lambda_2}{4\lambda_1}, \frac{\lambda_2}{8\eta L\lambda_1} \right\}
	\sqN{\begin{bmatrix}
		x^{k+1} - x^{\star}\\y^{k+1} - y^{\star}
		\end{bmatrix}}_\mQ
	\\&\quad+
	\frac{2(1-\tau)}{\tau}\bg_F(x_f^k, x^{\star})
	-
	\frac{2-\tau}{\tau}\bg_F (x_f^{k+1},x^{\star}).
	\end{align*}
	Plugging the parameter $\eta$ defined in \eqref{ALV:eq:eta}, we get
	\begin{align*}
	\sqN{\begin{bmatrix}
		x^{k+1} - x^{\star}\\y^{k+1} - y^{\star}
		\end{bmatrix}}_\mQ
	&\leq
	\sqN{\begin{bmatrix}
		x^{k} - x^{\star}\\y^{k} - y^{\star}
		\end{bmatrix}}_\mQ
	-
	\min\left\{\frac{\mu}{8\tau L},
	\frac{\lambda_2}{4\lambda_1}, \frac{\tau\lambda_2}{2\lambda_1} \right\}
	\sqN{\begin{bmatrix}
		x^{k+1} - x^{\star}\\y^{k+1} - y^{\star}
		\end{bmatrix}}_\mQ
	\\&\quad+
	\frac{2(1-\tau)}{\tau}\bg_F(x_f^k, x^{\star})
	-
	\frac{2-\tau}{\tau}\bg_F (x_f^{k+1},x^{\star})
	\\&\leq
	\sqN{\begin{bmatrix}
		x^{k} - x^{\star}\\y^{k} - y^{\star}
		\end{bmatrix}}_\mQ
	-
	\min\left\{\frac{\mu}{8\tau L},
	\frac{\lambda_2}{4\lambda_1}, \frac{\tau\lambda_2}{2\lambda_1} \right\}
	\sqN{\begin{bmatrix}
		x^{k+1} - x^{\star}\\y^{k+1} - y^{\star}
		\end{bmatrix}}_\mQ
	\\&\quad+
	\frac{2(1-\tau)}{\tau}\bg_F(x_f^k, x^{\star})
	-
	\left(1 + \frac{\tau}{2}\right)\frac{2(1-\tau)}{\tau}\bg_F (x_f^{k+1},x^{\star}).
	\end{align*}
	After rearranging the terms and using the definition of $\Psi^k$ in \eqref{ALV:eq:Psi}, we get
	\begin{align*}
		\Psi^k
		&\geq
		\left(
		1 + \min\left\{\frac{\tau}{2},\frac{\mu}{8\tau L},
		\frac{\lambda_2}{4\lambda_1}, \frac{\tau\lambda_2}{2\lambda_1} \right\}
		\right) \Psi^{k+1}.
	\end{align*}
	Plugging the parameter $\tau$ defined in \eqref{ALV:eq:tau}, we get
	\begin{align*}
	\Psi^{k}
	&\geq
	\left(1 + \frac{1}{4}\min\left\{\sqrt{\frac{\mu}{L}\frac{\lambda_2}{\lambda_1}},\frac{\lambda_2}{\lambda_1}\right\}\right)
	\Psi^{k+1}.
	\end{align*}
\end{proof}

\subsection{End of the Proof of Proposition~\ref{th:ALV}}
	The conditions of Lemma~\ref{ALV:lem:2} are satisfied, hence the following inequality holds for every $k\geq 0$:
	\begin{equation*}
		\Psi^{k+1}
		\leq
		\left(1 + \frac{1}{4}\min\left\{\sqrt{\frac{\mu}{L}\frac{\lambda_2}{\lambda_1}},\frac{\lambda_2}{\lambda_1}\right\}\right)^{-1}
		\Psi^k.
	\end{equation*}
	After telescoping we get
	\begin{equation*}
		\Psi^k \leq \left(1 + \frac{1}{4}\min\left\{\sqrt{\frac{\mu}{L}\frac{\lambda_2}{\lambda_1}},\frac{\lambda_2}{\lambda_1}\right\}\right)^{-k}
		\Psi^0.
	\end{equation*}
	Inequality \eqref{ALV:eq:Pbound} implies $\Psi^0 \leq C$, where $C \eqdef \frac{1}{\eta}\sqN{x^0 - x^{\star}} + \frac{1}{\theta}\sqn{y^0 - y^{\star}} +
		\frac{2(1-\tau)}{\tau}\bg_F(x_f^0, x^{\star}).$
	Hence, we obtain
	\begin{equation}
	\label{eq:last}
	\Psi^k \leq \left(1 + \frac{1}{4}\min\left\{\sqrt{\frac{\mu}{L}\frac{\lambda_2}{\lambda_1}},\frac{\lambda_2}{\lambda_1}\right\}\right)^{-k}
	C.
	\end{equation}
	It remains to lower bound $\Psi^k$ using \eqref{ALV:eq:Pbound} one more time:
	\begin{equation*}
		\frac{1}{\eta}\sqN{x^k - x^{\star}} + \frac{\eta\alpha}{\theta(1+\eta\alpha)}\sqN{y^k - y^{\star}}
		+\frac{2(1-\tau)}{\tau}\bg_F(x_f^k, x^{\star})
		\leq
		\Psi^k.
	\end{equation*}
	Combining with~\eqref{eq:last} gives the result.\hfill$\square$

\section{Proof of Theorem~\ref{th:main}}

\subsection{Proof of Equation~\eqref{eqch11}}\label{secab1}

The vector $z^N = \mathrm{Chebyshev}(x,\mL,b,N)$ is the $N^{\text{th}}$ iterate of the Chebyshev iteration, which amounts to  applying the Chebyshev polynomials $\widetilde{\boldsymbol{T}}_N$ to the residual $\mL^T (\mL z^{0}-b)$, to make it converge to zero. So,  $z^N$ satisfies 
\begin{equation}
\mL^T (\mL z^{N}-b)=\widetilde{\boldsymbol{T}}_N(\mW)\big(\mL^T (\mL z^{0}-b)\big),
\end{equation}
so that $\|\mL z^{N}-b\|$ converges linearly to zero when $N\rightarrow +\infty$.

Since $\widetilde{\boldsymbol{T}}_N(0) = 1$, there exists a polynomial $\widetilde{\boldsymbol{R}}_N$ such that $\widetilde{\boldsymbol{T}}_N(X)=1+X\widetilde{\boldsymbol{R}}_N(X)$. Therefore,
\begin{equation*}
\mL^T (\mL z^{n}-b)=\big(\mL^T (\mL z^{0}-b)\big) + \mW \widetilde{\boldsymbol{R}}_N(\mW)\big(\mL^T (\mL z^{0}-b)\big);
\end{equation*}
that is,
\begin{equation*}
\mW z^N = \mW \left(z^0 + \widetilde{\boldsymbol{R}}_N(\mW)\big(\mL^T (\mL z^{0}-b)\big)\right).
\end{equation*}
One can check by induction that $z^N \in z^0+ \range(\mW)$. Using $\mI_\sX + \mW \widetilde{\boldsymbol{R}}_N(\mW) = \widetilde{\boldsymbol{T}}_N(\mW)$,
\begin{align*}
 z^{n}&=z^0+\widetilde{\boldsymbol{R}}_N(\mW)\big(\mL^T (\mL z^{0}-b)\big)\\
 &=z^0+\mW \widetilde{\boldsymbol{R}}_N(\mW) z^0 - \widetilde{\boldsymbol{R}}_N(\mW)\mL^T b\\
 &=\widetilde{\boldsymbol{T}}_N(\mW) z^{0}-\widetilde{\boldsymbol{R}}_N(\mW)\mL^T b\\
 &=\widetilde{\boldsymbol{T}}_N(\mW) z^{0}-\widetilde{\boldsymbol{R}}_N(\mW)\mW x^\star\\
  &=\widetilde{\boldsymbol{T}}_N(\mW) z^{0}-\widetilde{\boldsymbol{T}}_N(\mW) x^\star + x^\star.
\end{align*}
 Finally, for every $z^0\in \sX$,
\begin{equation*}
{\mP(\mW)}z^0- {\mP(\mW)} x^{\star} = z^0 - \widetilde{\boldsymbol{T}}_N(\mW)z^0 - x^\star+\widetilde{\boldsymbol{T}}_N(\mW)x^\star
=z^0 - z^N.
\end{equation*}

\subsection{End of proof of Theorem~\ref{th:main}}

In Sections~\ref{sec:algo-equiv-P} and \ref{secab1}, we proved that Algorithm~\ref{alg:ALV} applied to the equivalent Problem~\eqref{eq:equiv-pb} is equivalent to our main Algorithm~\ref{alg:ALV-opt}. Therefore, we can prove our main Theorem~\ref{th:main} by applying Proposition~\ref{th:ALV} to Problem~\eqref{eq:equiv-pb}. Indeed, the proof of Theorem~\ref{th:main} is a direct application of Proposition~\ref{th:ALV} to Problem~\eqref{eq:equiv-pb}, using that $N \geq \sqrt{\chi}$ implies $\lambda_{\max}(\mP(\mW)) \leq 19/15, \lambda_{\min}^+(\mP(\mW)) \geq 11/15$ and $\chi(\mP(\mW)) \leq 19/11$, see Section~\ref{sec:chiP}.\hfill $\square$

\end{document}